\tikzset{/tikz/notestyleraw/.append style={text=black}}
\newtheorem{thm}{Theorem}[section]
\newtheorem{lem}[thm]{Lemma}
\newtheorem{defn}[thm]{Definition}
\newtheorem{prop}[thm]{Proposition}
\newtheorem{cor}[thm]{Corollary}
\newtheorem{rmk}[thm]{Remark}
\newcommand{\be}{\begin{eqnarray}}
\newcommand{\ee}{\end{eqnarray}}
\newcommand{\beal}{\begin{aligned}}
\newcommand{\enal}{\end{aligned}}
\newcommand{\eps}{\varepsilon}
\newcommand{\lb}{\lambda}
\newcommand{\T}{\mathbb{T}}
\newcommand{\R}{\mathbb{R}}
\newcommand{\om}{\omega}
\newcommand{\dt}{\delta}
\newcommand{\cM}{\mathcal{M}}
\newcommand{\cF}{\mathcal{F}}
\newcommand{\cT}{\mathcal{T}}
\newcommand{\cA}{\mathcal{A}}
\newcommand{\cL}{\mathcal{L}}
\newcommand{\wh}{\widehat }
\newcommand{\wt}{\widetilde }
\title[Convergence of the viscosity solutions of contact H-J equations]{Convergence of viscosity solutions of generalized contact Hamilton-Jacobi equations}
\thanks{Email: $*$yananwang@fudan.edu.cn,\quad$\dagger$yanjun@fuan.edu.cn,\quad $\ddagger$ jellychung1987@gmail.com}
\subjclass[2010]{35B40, 37J50, 37J55, 49L25}
\keywords{viscosity solution, contact Hamiltonian, action minimizer, Aubry-Mather theory, weak KAM solution, Peierls barrier}
\date{}
\begin{document}
\maketitle

\centerline{\scshape Ya-Nan Wang$^*$}
\medskip
{\footnotesize
\centerline{School of Mathematical Sciences, Nanjing Normal University, Nanjing, 210097, China}

}
\bigskip

\centerline{\scshape Jun Yan$^\dagger$}
\medskip
{\footnotesize
\centerline{School of Mathematical Sciences, Fudan University and Shanghai Key Laboratory}
\centerline{ for Contemporary Applied Mathematics, Shanghai 200433, China}

}
\bigskip

\centerline{\scshape Jianlu Zhang$^\ddagger$}
\medskip
{\footnotesize
\centerline{Hua Loo-Keng Key Laboratory of Mathematics \&}
 \centerline{Mathematics Institute, Academy of Mathematics and systems science}
 \centerline{Chinese Academy of Sciences, Beijing 100190, China}
}
\bigskip

\begin{abstract}
For any compact connected manifold $M$, we consider the generalized contact Hamiltonian $H(x,p,u)$ defined on $T^*M\times\R$ which is conex in $p$ and monotonically increasing in $u$. Let $u_\epsilon^-:M\rightarrow\R$ be the viscosity solution of the parametrized contact Hamilton-Jacobi equation
\[
H(x,\partial_x u_\epsilon^-(x),\epsilon u_\epsilon^-(x))=c(H)
\]
with $c(H)$ being the Ma\~n\'e Critical Value. We prove that $u_\epsilon^-$ converges uniformly,  as $\eps\rightarrow 0_+$, to a specfic viscosity solution $u_0^-$ of the critical equation
\[
H(x,\partial_x u_0^-(x),0)=c(H)
\]
 which can be characterized as a minimal combination of associated Peierls barrier functions. 
\end{abstract}

\vspace{40pt}

\section{Introduction}\label{s1}
\vspace{20pt}

 For a smooth compact Riemannian manifold $M$ without boundary, the Hamiltonian $H$ is usually characterized as a $C^r-$function ($r\geq2$) on the contagent bundle $T^*M$, with the associated Hamilton's equation
 \be\label{eq:ham}
 \left\{
\begin{aligned}
\dot x&=\partial_p H(x,p)\\
\dot p&=-\partial_x H(x,p)
\end{aligned}
\right.
\ee
for $(x,p)\in T^*M$. The Hamilton's equation describes the movement of macroscopic objects with conservative energy, of which the Hamiltonian $H(x,p)$ works as a {\sf First Integral} of (\ref{eq:ham}). Forecasting the complete movement from any initial point $(x,p)\in T^*M$ became a time-honored target since the age of Newton, until the chaos phenomenon was firstly revealed by Poincar\'e, in his research on the generalized solution of  {\sf Three Body Problem} \cite{P}. That ends the endeavor to give an explicit solution for (\ref{eq:ham}) and leads to a new direction of research: categorizing certain {\sf invariant sets} and verifying their topological stability. This conversion leads to a prosperity of quanlitative theory in the last centrury.\\

For Hamiltonians positive definite of the momentum $p$, e.g. the mechanical Hamiltonian, the Riemannian metric etc, we can translate the Hamiltonian dynamics into Lagrangian dynamics via the {\sf Legendre transformation}, then each trajectory of (\ref{eq:ham}) becomes a critical curve of the {\sf Euler-Lagrange equation} and vice versa. Benefiting from the {\sf Aubry-Mather theory} \cite{Ma} and the {\sf weak KAM solutions} \cite{F}, we now have a clearer understanding of the invariant sets with variational meaning. Besides, variational connecting orbits between these invariant sets are also constructed in \cite{BKZ,CY1,CY2}, as a dynamical extension.\\

Notice that all the motions of the masses would inevitably sustain friction from the environment, e.g. the wind, the fluid, interface etc. That leads to a dissipation of energy. From a viewpoint of PDE, Lions and Barles firstly studied this kind of dissipative equations in \cite{Ba, L}. Following their ideas, now we consider $H(x,p,u)$ on $T^*M\times\R$ which is $C^2$ and satisfies the following standing assumptions:

\begin{description}
	\item [(H1)]\textbf{Positive definiteness} For every $(x,p,u)\in T^*M\times\mathbf{R}$, the second order partial derivative $\partial^2H/\partial p^2(x,p,u)$ is positive definite as a quadratic form.
	\item [(H2)] \textbf{Superlinearity} For every $(x,u)\in M\times\mathbf{R}$, $\lim_{|p|\rightarrow+\infty}H(x,p,u)/|p|=+\infty$.
	\item [(H3)] \textbf{Monotonicity} There exists $\Delta>0$, such that 
	$
	0<\frac{\partial H}{\partial u}(x,p,u)\leq\Delta$ for all $ (x,p,u)\in T^*M\times\mathbf{R}.$
\end{description}
In some literatures, (H1)-(H2) are usually called {\sf Tonelli conditions} \cite{F}. Then for any $(x,p,u)\in T^*M\times\R$, the corresponding {\sf contact Hamilton's equations} can be written by 
\begin{equation}\label{eq:ode}
	\begin{cases}
	\dot x =\frac{\partial H}{\partial p}(x,p,u), \\
\dot p= -\frac{\partial H}{\partial x}(x,p,u)-p\frac{\partial H}{\partial u}(x,p,u), \\
\dot u=p\cdot \frac{\partial H}{\partial p}(x,p,u)-H(x,p,u).	
	\end{cases}
\end{equation}

%
\begin{rmk}
Particularly, if the Hamiltonian is linear of $u$, the previous two equations of (\ref{eq:ode}) are independent of $u-$variable so we can still take $T^*M$ as the phase space and the $3^{rd}$ equation of (\ref{eq:ode}) becomes an additional control. This system is usually called {\sf discounted}. For such a system, the flow $\varphi_{H,\lb}^t$ transports the standard symplectic form into a multiple of itself, i.e.
\[ 
(\varphi_{H,\lb}^t)^*dp\wedge d x= e^{\lb t}dp\wedge d x,\quad\forall \ t\in\R
\]
with $\lb\equiv\partial_uH$ being a constant. In different literatures we may encounter this system but with different names, e.g. {\sf conformally symplectic system} \cite{CCD,MS}, {\sf dissipative system} \cite{WL}, or notably {\sf Duffing equation} when $M$ is an $1-$dimensional manifold \cite{MH}. Physically, this kind of equation describes the mechanical motion of masses with friction proportional to the velocity, which is widely considered in astronomy \cite{CC}, transport \cite{WL} and economics \cite{B}.
\end{rmk}

Following the setting of \cite{Ba}, we consider the {\sf Hamilton-Jacobi equation} satisfying our standing assumptions and parameterize it by a $\epsilon\in[0,1]$, i.e.
\be\label{eq:hj}
H(x,\partial_x u_\epsilon(x),\epsilon u_\epsilon(x))=c(H)
\ee
for a constant $c(H)$ posteriorly decided. By the {\sf Comparison Principle} the viscosity solution $u_\epsilon$ of (\ref{eq:hj}) is unique (see \cite{Ba} or Theorem 3.2 of \cite{CCIZ}), so a natural  question is to study the asymptotic behavior of $u_\epsilon$ as $\epsilon$ tends to zero. 
A heuristic idea is to show that $u_\epsilon$ are equi-Lipschitz and uniformly bounded when $\epsilon\in(0,1]$, if so, by the {\sf Ascoli-Alzel\`a Theorem} each accumulating function of $u_\epsilon$ would be a viscosity solution of the conservative Hamilton-Jacobi equation 
\be\label{eq:hj-0}
H(x,\partial_x u(x),0)=c(H)
\ee
and accordingly $c(H)$ has to be the {\sf Ma\~n\'e Critical Value} (see Appendix \ref{a1} for the definition). By defining the {\sf Tonelli Lagrangian}:
\[
L(x,\dot x,u)=\max_{p\in T_x^*M}\{\langle\dot x,p\rangle-H(x,p,u)\},
\]
the uniqueness of the accumulating function of $u_\epsilon$ can be stated as the following:
\begin{thm}[Main 1]\label{thm:1}
Let $H:T^*M\times\R \rightarrow\R$ be a Hamiltonian satisfying the standing assumptions. For $\epsilon > 0$, we denote by $u_\epsilon :M\rightarrow\R$ the unique continuous viscosity solution of (\ref{eq:hj}), then the family $u_\epsilon$ converges to a unique viscosity solution $u_0$ of (\ref{eq:hj-0}) as $\eps\rightarrow 0$, which is the largest critical subsolution $u:M\rightarrow\R$ of (\ref{eq:hj-0}) such that for every projected Mather measure $\mu$ (defined in Appendix \ref{a1}), 
\[
\int_{M} u(y)\cdot\frac{\partial L}{\partial u}(y,v(y),0)d\mu(y)\geq0.
\]
\end{thm}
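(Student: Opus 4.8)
The plan is to establish the theorem in three stages: compactness of the family $\{u_\epsilon\}$, identification of any limit as a critical subsolution satisfying the Mather-measure inequality, and uniqueness of such a subsolution. First I would derive uniform bounds. Since $H$ is monotone in $u$ with $0<\partial_uH\leq\Delta$, one shows by the Comparison Principle that $u_\epsilon$ is squeezed between the solutions of $H(x,\partial_xu,\epsilon u)=c(H)$ obtained from the constant sub/supersolutions $\pm C/\epsilon$, but more usefully one argues directly: if $u_\epsilon$ attained a large positive maximum at $x_0$, then at the point of tangency with a smooth test function $H(x_0,0,\epsilon u_\epsilon(x_0))\leq c(H)$ forces $\epsilon u_\epsilon(x_0)$ bounded above (using monotonicity and that $\min_p H(x,p,0)$ is a fixed continuous function); a symmetric argument at the minimum gives $\epsilon\|u_\epsilon\|_\infty\leq K$ for a constant $K$ independent of $\epsilon$. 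Equi-Lipschitz continuity then follows from the standard a priori gradient bound for Tonelli Hamilton--Jacobi equations: on the compact set $\{|\epsilon u_\epsilon|\leq K\}$, superlinearity gives $|\partial_xu_\epsilon(x)|\leq R$ for a uniform $R$. Hence by Ascoli--Arzel\`a every sequence $\epsilon_n\to 0$ has a subsequence with $u_{\epsilon_n}\to w$ uniformly, and $\epsilon_n u_{\epsilon_n}\to 0$ uniformly, so by stability of viscosity solutions $w$ solves (\ref{eq:hj-0}); this also pins down $c(H)$ as the Ma\~n\'e critical value since (\ref{eq:hj-0}) admits a viscosity solution.

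Second, I would prove that every such limit $w$ satisfies the Mather-measure inequality. The idea is to test the equation against a projected Mather measure $\mu$. Using the representation of $u_\epsilon$ as an action-minimizing value and the corresponding calibrated curves, or more directly by a linearization argument: write $H(x,\partial_xu_\epsilon,\epsilon u_\epsilon)-H(x,\partial_xu_\epsilon,0)=\epsilon u_\epsilon\cdot\partial_uH(x,\partial_xu_\epsilon,\theta_\epsilon)$ for some intermediate $\theta_\epsilon$, so that $u_\epsilon$ is a viscosity subsolution of $H(x,\partial_xu_\epsilon,0)\leq c(H)-\epsilon u_\epsilon\,\partial_uH(\cdots)$. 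Integrating the Lagrangian inequality (the Fenchel inequality $L(x,v,0)+H(x,p,0)\geq\langle p,v\rangle$ with $p=\partial_xu_\epsilon$ along the Mather curves, using that $\mu$ is a closed/holonomic measure and that $c(H)=-\int L(x,v,0)d\mu$) against $d\mu$ yields, after dividing by $\epsilon$ and letting $\epsilon\to 0$, the inequality $\int_M w(y)\,\partial_uL(y,v(y),0)\,d\mu(y)\geq 0$, where one uses the duality relation $\partial_uL=-\partial_uH$ evaluated along the Mather set to convert between the Hamiltonian and Lagrangian forms of the constraint and the fact that on $\mathrm{supp}\,\mu$ the subsolution $w$ is in fact differentiable with $\partial_xw(y)=\partial_{\dot x}L(y,v(y),0)$. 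That $w$ is the \emph{largest} such subsolution follows because any critical subsolution $v$ obeying the same inequality is a subsolution of the $\epsilon$-equation up to an $O(\epsilon)$ error absorbed by a small constant shift, so $v\leq u_\epsilon+o(1)$ by comparison, giving $v\leq w$ in the limit.

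Third, uniqueness of the limit: since the class of critical subsolutions satisfying the Mather-measure constraint is nonempty (it contains, e.g., suitable combinations of Peierls barriers, as the abstract promises) and closed under pointwise supremum of bounded subfamilies, it has a largest element, which by the squeezing argument above must equal every subsequential limit $w$; hence the full family $u_\epsilon$ converges. I would then, for the record, recognize this maximal element as the minimal combination of Peierls barrier functions $h(\cdot,y)$ over $y$ in the projected Aubry set with weights determined by the Mather measures, matching the characterization announced in the abstract, though that identification is really the content of a companion result and not needed for Theorem~\ref{thm:1} as stated.

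The main obstacle I expect is the second stage, specifically making rigorous the passage to the limit in the linearized inequality on the support of the Mather measure. The delicate points are: (i) controlling the intermediate value $\theta_\epsilon$ and the non-smoothness of $u_\epsilon$ (handled by working with the Mather measure, which is supported where calibrated curves and hence differentiability of $u_\epsilon$ are available, or by a mollification/doubling-of-variables argument); (ii) justifying that $\frac{1}{\epsilon}(u_\epsilon-w)$ need not converge but its integral against $\mu$-weighted $\partial_uL$ has the right sign in the limit — this requires the one-sided Fenchel inequality rather than an identity, so only an inequality, not an equation, survives; and (iii) verifying that the limit subsolution is the \emph{largest} one in the constrained class, which hinges on a sharp comparison estimate quantifying how an $O(\epsilon)$ perturbation of the right-hand side perturbs the solution — here one again exploits monotonicity (H3) to get the $1/\epsilon$-type stability constant needed to close the argument.
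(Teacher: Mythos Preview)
Your compactness step has a gap: the maximum/minimum test-function argument you give only yields $\epsilon\|u_\epsilon\|_\infty\leq K$, not $\|u_\epsilon\|_\infty\leq K$, and Ascoli--Arzel\`a needs the latter. Equi-Lipschitzness bounds the oscillation but not the level. The paper obtains the genuine uniform bound by a different route, working with the backward Lax--Oleinik semigroup $\mathcal{T}^{\epsilon-}_t\phi$ and comparing the action along minimizers with the $c(H)$-action function $h^t(x,y)+c(H)t$ of $H(x,p,0)$; this uses $\partial_uL<0$ to pass between the $\epsilon$- and $0$-Lagrangians in the correct direction. A simpler fix is available (use a critical weak KAM solution of $H(x,p,0)=c(H)$ shifted up and down as $\epsilon$-independent barriers), but your stated argument does not close.

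The serious gap is the ``largest'' direction. Your sketch says that any $v\in\mathcal F_-$ is ``a subsolution of the $\epsilon$-equation up to an $O(\epsilon)$ error absorbed by a small constant shift'', then compare with $u_\epsilon$. This fails: $H(x,\partial_xv,\epsilon v)\leq c(H)+\epsilon v\,\partial_uH(\cdots)$, and the extra term has the wrong sign wherever $v>0$. Since the constraint $\int v\,\partial_uL\,d\mu\geq 0$ only controls a $\partial_uL$-weighted average of $v$ on $\mathrm{supp}\,\mu\subset\mathcal M$ (and says nothing off the Mather set), no constant shift repairs this globally, and the standard comparison principle gives no information. The ``$1/\epsilon$-type stability'' you invoke goes in the opposite direction: monotonicity makes constants \emph{diverge} like $1/\epsilon$, which is exactly why naive comparison is useless here.

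The paper supplies the missing idea. Fix $x$ and take the \emph{backward calibrated curve} $\gamma_{x,\epsilon}^-$ of $u_\epsilon^-$; along it $u_\epsilon^-$ is differentiable and $\tfrac{d}{dt}u_\epsilon^-(\gamma_{x,\epsilon}^-(t))=L(\gamma,\dot\gamma,\epsilon u_\epsilon^-)+c(H)$. Comparing with a smooth approximation of an arbitrary subsolution $\omega$ of $H(x,p,0)=c(H)$ and integrating with the exponential weight $\exp\big(-\epsilon\int_0^t\alpha_{x,\epsilon}(s)\,ds\big)$, where $\alpha_{x,\epsilon}(t)=\int_0^1\partial_uL(\gamma,\dot\gamma,\tau\epsilon u_\epsilon^-)\,d\tau<0$, one gets
\[
u_\epsilon^-(x)\geq\omega(x)+\int_{TM}\omega(y)\Big(\int_0^1\partial_uL(y,v,\tau\epsilon u_\epsilon^-(y))\,d\tau\Big)\,d\widetilde\mu_x^\epsilon(y,v),
\]
with $\widetilde\mu_x^\epsilon$ the finite measure built from the weighted occupation of $\gamma_{x,\epsilon}^-$. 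The crucial lemma is that any weak limit of the \emph{normalized} $\widehat\mu_x^\epsilon$ is a Mather measure of $H(x,p,0)$ (one checks it is closed and has action $-c(H)$ using the calibration identity). Only then does the constraint $\int\omega\,\partial_uL\,d\widehat\mu_x\geq 0$ enter, yielding $u_0^-(x)\geq\omega(x)$ for every $\omega\in\mathcal F_-$. In short, the mechanism is not a global comparison principle but an \emph{occupation-measure} argument along $\epsilon$-calibrated curves that manufactures the Mather measure in the limit; your proposal does not contain this step, and without it the maximality claim is unproved. Your treatment of the other inequality (limits lie in $\mathcal F_-$) is essentially correct in spirit; the paper carries it out via a Birkhoff-generic Mather orbit and the domination inequality for $u_\epsilon^-$, which sidesteps the differentiability issue you flag.
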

The second conclusion we want to display is a dynamic interpretation of previous $u_0$, by using the Peierls barrier functions (see Appendix \ref{a1} for the definition):
\begin{thm}[Main 2]\label{thm:2}
The limit function $u_0$, obtained in Theorem \ref{thm:1} above, can be characterized in the following way:
 it is the infimum of the functions $h_\mu^\infty$ defined by 
\be\label{eq:con-1}
h_\mu^\infty(x):=\dfrac{\int_Mh^\infty(y,x)\cdot\frac{\partial L}{\partial u}(y,v(y),0)d\mu(y)}{\int_M\frac{\partial L}{\partial u}(y,v(y),0)d\mu(y)},\quad\forall\ x\in M
\ee
over all projected Mather measures $\mu$, where $h^\infty(\cdot,\cdot)$ is the Peierls barrier of the conservative Hamiltonian $H(x,p,0)$.
\end{thm}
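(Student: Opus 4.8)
The plan is to show a two-sided inequality between $u_0$ and $\inf_\mu h_\mu^\infty$, where $\mu$ runs over projected Mather measures, exploiting the variational characterization of $u_0$ furnished by Theorem~\ref{thm:1}. Throughout I write $w(y):=\frac{\partial L}{\partial u}(y,v(y),0)>0$ for the relevant weight, which by (H3) (and the Legendre duality, since $\partial_u L=-\partial_u H$ evaluated at the dual point, so up to sign it is bounded between $-\Delta$ and a negative constant — I would first pin down the correct sign convention so that the denominators in \eqref{eq:con-1} are nonzero and the barrier inequalities go the right way). The first, and easier, half is to verify that each $h_\mu^\infty$ is itself an admissible competitor in the characterization of Theorem~\ref{thm:1}: namely, that $h_\mu^\infty$ is a critical subsolution of \eqref{eq:hj-0} and satisfies $\int_M h_\mu^\infty\,w\,d\mu'\ge 0$ for every projected Mather measure $\mu'$. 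The subsolution property follows because $x\mapsto h^\infty(y,x)$ is a critical subsolution for each fixed $y$ (a standard property of the Peierls barrier), and a convex average of subsolutions of a convex equation is again a subsolution; the normalization by $\int_M w\,d\mu$ does not affect this. Since $u_0$ is the \emph{largest} such admissible function, this would give $h_\mu^\infty\ge u_0$ for every $\mu$, hence $\inf_\mu h_\mu^\infty\ge u_0$.

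The reverse inequality $u_0\ge \inf_\mu h_\mu^\infty$ is the substantive half. Here I would use the dominated/calibration structure: $u_0$, being a critical solution (not merely a subsolution), is dominated by $h^\infty$ in the sense that $u_0(x)-u_0(y)\le h^\infty(y,x)$ for all $x,y\in M$. Integrating this against $w\,d\mu$ in the $y$ variable and dividing by $\int_M w\,d\mu$ yields
\[
u_0(x)\;\le\; h_\mu^\infty(x)\;+\;\frac{\int_M u_0(y)\,w(y)\,d\mu(y)}{\int_M w(y)\,d\mu(y)}.
\]
So it suffices to produce, for each $\eta>0$, a projected Mather measure $\mu$ for which $\int_M u_0\,w\,d\mu\le \eta$. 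Combined with the constraint $\int_M u_0\,w\,d\mu\ge 0$ coming from Theorem~\ref{thm:1}, this would force $\inf_\mu \int_M u_0\,w\,d\mu=0$ and hence $u_0(x)\le \inf_\mu h_\mu^\infty(x)$.

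The crux is therefore: \textbf{show that $\inf_\mu \int_M u_0\,w\,d\mu = 0$ over projected Mather measures $\mu$.} I expect this to be the main obstacle, and I would approach it by contradiction. If $\int_M u_0\,w\,d\mu\ge \delta>0$ for \emph{all} projected Mather measures $\mu$, then one can perturb $u_0$ downward to a strictly smaller function that is still a critical subsolution and still satisfies the integral constraints, contradicting the maximality of $u_0$ in Theorem~\ref{thm:1}. Concretely: the set of Mather measures is the set of minimizers of a linear functional over the convex compact set of closed (holonomic) probability measures, and $u_0$ being a critical solution means the Lagrangian-action "$L+c(H)$" functional is nonnegative on all closed measures and vanishes exactly on Mather measures; a standard separation/duality argument (in the spirit of Mañé's and Fathi's work, and the discounted-equation analysis of Davini--Fathi--Iturriaga--Zavidovique) then converts the strict positivity of $\int u_0\,w\,d\mu$ on the minimizing face into room to lower $u_0$ along the Aubry set while preserving admissibility. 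Making this perturbation argument precise — in particular controlling it near the projected Aubry set where subsolutions are rigid — is where the real work lies; the rest is bookkeeping with the Peierls barrier's domination and triangle-type inequalities.
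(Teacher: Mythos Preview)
Your proposal has a genuine gap, and it stems from the sign of $w$. Since $\partial_u H>0$ and $\partial_u L=-\partial_u H$ at Legendre-dual points, one has $w(y)=\frac{\partial L}{\partial u}(y,v(y),0)<0$; in particular $\int_M w\,d\mu<0$. Once you track this through, your two halves swap roles. In your ``second half'' the domination $u_0(x)-u_0(y)\le h^\infty(y,x)$ integrated against $w\,d\mu$ gives
\[
u_0(x)\le h_\mu^\infty(x)+\frac{\int_M u_0\,w\,d\mu}{\int_M w\,d\mu},
\]
and the last fraction is $\le 0$ because the numerator is $\ge 0$ (Theorem~\ref{thm:1}) while the denominator is $<0$. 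So $u_0\le h_\mu^\infty$ for every $\mu$ \emph{directly}; your ``crux'' ($\inf_\mu\int u_0 w\,d\mu=0$) and the separation/perturbation argument are unnecessary for this direction. This is exactly how the paper handles $u_0\le\inf_\mu h_\mu^\infty$.

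The real problem is your ``first half''. First, the logic is inverted: if $h_\mu^\infty$ were in the admissible class $\cF_-$ and $u_0$ is the supremum of that class, you would conclude $h_\mu^\infty\le u_0$, not $\ge$. Second, and more seriously, the claim $h_\mu^\infty\in\cF_-$ is false in general: with two distinct Aubry classes $\{a\},\{b\}$, $h^\infty(a,b)>0$, and $\mu=\delta_a$, $\mu'=\delta_b$, one computes $\int h_\mu^\infty\,w\,d\mu'=h^\infty(a,b)\,w(b)<0$. So you cannot feed $h_\mu^\infty$ into Theorem~\ref{thm:1}. The paper obtains the missing direction $\inf_\mu h_\mu^\infty\le u_0$ by a different construction: for each $y$ in the projected Aubry set it sets $\omega(x):=-h^\infty(x,y)+\widehat u_0^-(y)$ (a subsolution since $-h^\infty(\cdot,y)$ is), checks from the very definition of $\widehat u_0^-$ that $\int_M\omega\,w\,d\mu\ge 0$ for every $\mu$, hence $\omega\in\cF_-$ and $\omega\le u_0$; evaluating at $x=y$ and using $h^\infty(y,y)=0$ gives $\widehat u_0^-(y)\le u_0(y)$ on $\cA$, which suffices by the comparison principle for subsolutions versus solutions on $\cA$. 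This Aubry-pointwise trick is the idea your proposal is missing.
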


The first result about the vanishing viscosity limit of solutions to the Hamilton-Jacobi equation was achieved by the group of Davini-Itturiaga-Fathi-Zavidovique for the discounted systems \cite{DFIZ}. Later, a similar result was proved for contact Hamilton systems by the group of Chen-Cheng-Ishii-Zhao in \cite{CCIZ}, by proposing a reasonable asymptotic condition of $H$, as an extension of Fathi's idea. It's remarkable that in \cite{CCIZ} only a $C^0-$smoothness of $H$ is needed. However, they required the following (SH5) assumption:\medskip

{\tt There exist positive constants $\kappa_R^\eps\leq K_R^\eps$ depending on $R$ and $\eps$ such that
\[
\eps\kappa_R^\eps|u|\leq |H(x,p,\eps u)-H(x,p,0)|\leq\eps K_R^\eps|u|,\quad\forall\ |p|_x\leq R, |u|\leq R
\]
and there exists a suitably large $R_0>0$ such that
\[
\lim_{\eps\rightarrow 0_+}\frac{\kappa_{R_0}^\eps}{K_{R_0}^\eps}=1.\tag{SH5}
\]
}
For $C^1-$smooth $H(x,p,u)$, this (SH5) assumption is equivalent to 
\[
\partial_u H(x,p,0)=\ \text{constant,}
\]
i.e. $H(x,p,u)$ is linearly asymptotic to $H(x,p,0)$. In the current paper we generalize the result of \cite{CCIZ} by removing (SH5), and prove the convergence for more general contact Hamiltonians. Besides, our approach has more dynamic flavors, which benefits from the work on the Aubry-Mather theory of contact Hamilton's equations developed in \cite{Su,WWY}.

\begin{cor}[Discounted Equation]\label{cor:dis}
For the system satisfying $\partial_uH(x,p,u)\equiv 1$ for all $(x,p,u)\in T^*M\times\R$, the parametrized  viscosity solutions $u_\eps$ converges to a unique $u_0$ of (\ref{eq:hj-0}), which can be characterized in the following ways:
\begin{itemize}
\item it is the largest critical subsolution $u:M\rightarrow\R$ such that for every projected Mather measure $\mu$, there holds
\be
\int_M u(y)d\mu\leq0;
\ee
 \item it is the infimum over all projected Mather measures $\mu$ of the functions
$h_\mu^\infty$ defined by 
\be\label{eq:dis-1}
h_\mu^\infty(x)={\int_Mh^\infty(y,x)d\mu(y)},\quad\forall\ x\in M
\ee
where $h^\infty(y,x)$ is a Peierls barrier function of the conservative Hamiltonian $H(x,p,0)$.\end{itemize}
\end{cor}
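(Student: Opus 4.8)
The plan is to obtain Corollary \ref{cor:dis} as a direct specialization of Theorems \ref{thm:1} and \ref{thm:2}: all that is required is to evaluate the weight $\frac{\partial L}{\partial u}(y,v(y),0)$ that appears in both statements when the Hamiltonian satisfies $\partial_uH\equiv 1$.

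First I would record the duality identity between the $u$-derivatives of $H$ and $L$. Under the Tonelli conditions (H1)--(H2) the Lagrangian $L(x,\dot x,u)=\max_{p}\{\langle\dot x,p\rangle-H(x,p,u)\}$ is $C^1$, and for each $(x,\dot x,u)$ the maximum is attained at the unique momentum $p=p(x,\dot x,u)$ determined by $\dot x=\partial_pH(x,p,u)$. Differentiating the identity $L(x,\dot x,u)=\langle\dot x,p(x,\dot x,u)\rangle-H(x,p(x,\dot x,u),u)$ with respect to $u$ and using the maximizer relation $\partial_pH=\dot x$ to cancel the two middle terms, one gets
\[
\frac{\partial L}{\partial u}(x,\dot x,u)=-\frac{\partial H}{\partial u}\bigl(x,p(x,\dot x,u),u\bigr).
\]
Hence, in the discounted case $\partial_uH\equiv 1$, we have $\frac{\partial L}{\partial u}\equiv -1$ on all of $TM\times\R$; in particular $\frac{\partial L}{\partial u}(y,v(y),0)=-1$ for every $y\in M$, regardless of which projected Mather measure $\mu$ is considered.

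Next I would substitute this value into the two characterizations of $u_0$. In Theorem \ref{thm:1} the constraint $\int_M u(y)\cdot\frac{\partial L}{\partial u}(y,v(y),0)\,d\mu(y)\geq 0$ becomes $-\int_M u\,d\mu\geq 0$, i.e. $\int_M u\,d\mu\leq 0$; so $u_0$ is the largest critical subsolution of (\ref{eq:hj-0}) with $\int_M u\,d\mu\leq 0$ for every projected Mather measure $\mu$, which is the first claim. For the second claim, every projected Mather measure is a Borel probability measure, so $\int_M\frac{\partial L}{\partial u}(y,v(y),0)\,d\mu(y)=-1$, and the formula (\ref{eq:con-1}) collapses to
\[
h_\mu^\infty(x)=\frac{-\int_Mh^\infty(y,x)\,d\mu(y)}{-1}=\int_Mh^\infty(y,x)\,d\mu(y),
\]
which is exactly (\ref{eq:dis-1}); Theorem \ref{thm:2} then yields $u_0=\inf_\mu h_\mu^\infty$ over all projected Mather measures.

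Being a specialization, this argument presents no serious obstacle. The only delicate point is to justify that the identity $\frac{\partial L}{\partial u}=-\frac{\partial H}{\partial u}$ holds pointwise on the whole of $TM\times\R$, and not merely along Euler--Lagrange orbits, which is why I would prove it as a stand-alone lemma via the Legendre transform rather than invoking it only on the Mather set; one also has to keep careful track of the sign and of the normalization $\mu(M)=1$ that makes the denominator in (\ref{eq:con-1}) equal to $-1$.
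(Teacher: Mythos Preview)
Your proposal is correct and matches the paper's intent: the corollary is stated without proof precisely because it is the specialization of Theorems \ref{thm:1} and \ref{thm:2} to the case $\partial_uH\equiv 1$, and your computation of $\partial_uL=-\partial_uH\equiv -1$ via the Legendre transform, together with the normalization $\mu(M)=1$, is exactly what collapses the general formulas to the discounted ones.
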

\begin{rmk}
 
For typical $L(x,v,\eps u)$ nonlinear of  $u$, the limit of $u_\eps$ is generally different from the limit of viscosity solutions to discounted systems. By comparing (\ref{eq:con-1}) with (\ref{eq:dis-1}) we can perceive this, and in Section \ref{s4}, we construct a concrete example to verify it. 

Moreover, it's still open whether we can get different vanishing viscosity limits of solutions subject to different Aubry classes of (\ref{eq:hj-0}), by taking different $1-$jet function $\frac{\partial L}{\partial u}(x,\dot x,0)$. As is shown in (\ref{eq:con-1}), different $1-$jet functions give different combinations of the Peierls barrier functions, which changes the $\alpha-$limit of part of backward calibrated curves (see (\ref{eq:back-cal}) for the definition). Therefore, exploring the relationship between $\frac{\partial L}{\partial u}(x,\dot x,0)$ and the distribution of backward calibrated curves is a fascinating subject.
\end{rmk}

%
%

\subsection{Organization of the article} The paper is organized as follows: First, we exhibit some relevant conclusions about the viscosity solutions of the contact Hamilton-Jacobi equation in Section \ref{s2}, which will be used in the proof of our main theorems. Second, we prove the convergence of the viscosity solutions of (\ref{eq:hj}) in Section \ref{s3}. Finally, in Section \ref{s4}, we present an alternative interpretation of the limit solution $u_0$ by using the language of the Aubry-Mather theory, and explore the dynamic difference between our limit $u_0$ and the discounted limit in \cite{DFIZ}. For the consistency and the readability of the proof, we moved some preliminary works of the Aubry-Mather theory and some longsome technical proofs into Appendix \ref{a1}.\\

\subsection*{Acknowledgements} The first author is supported by National Natural Science Foundation of China (Grant No.11501437) and China Postdoctoral Science Foundation(No.2017M611439). The second author is supported by National Natural Science Foundation of China (Grant No. 11631006 and 11790272) and Shanghai Science and Technology Commission (Grant No. 17XD1400500). 
The third author is supported by the National Natural Science Foundation of China (Grant No. 11901560). All the authors are grateful to Prof. Wei Cheng for helpful discussion about the details. 

\vspace{20pt}

\section{Preliminary: contact Hamiltonian and its variational principle}\label{s2}
\vspace{20pt}

In this section, we provide some conclusions for the contact Hamiltonian satisfying the standing assumptions, which can be interpreted as a contact version of the Aubry-Mather theory. Recall that $M$ is a connected and compact manifold without boundary, so we denote by $x\in M$ the coordinate of the configuration space and $p\in T_x^*M$ the $1-$form of the vector space $T_xM$.
%
The Legendre transformation of $H(x,p,u)$ w.r.t. $p$ gives the Lagrangian
\be
L(x,\dot x,u)&=&\max_{p\in T_x^*M}\{\langle\dot x,p\rangle-H(x,p,u)\}
\ee
of which the maximizer is achieved as $\dot x=\partial_pH(x,p,u)$. Indeed, due to the convexity of $H(x,p,u)$ in $p-$variable, the following 
\[
\cL: T^*M\times \R\rightarrow TM\times\R ,\quad\text{via}\quad (x,p,u)\rightarrow (x, H_p(x,p,u),u)
\]
is a diffeomorphism. Due to the conclusion in \cite{Su}, there exists an implicit  {\sf backward Lax-Oleinik operator} $\cT^{-}_t$ defined by 
\begin{equation}\label{forward semigroup}
	\cT^{-}_t\phi(x)=\inf_{\substack{\gamma\in C^{ac}([0,t],M)\\\gamma(t)=x}}\Big\{\phi(\gamma(0))+\int^t_0L(\gamma(s),\dot{\gamma}(s),\cT^{-}_s\phi(\gamma(s)))+c(H)\ \mbox{d}s\Big\},
\end{equation}
for any $\phi(\cdot)\in C^0(M,\R)$ and $t\geq 0$.


%
%

\begin{lem} \cite{Su}\label{lem:vs-contact}
For contact Hamiltonian $H(x,p,u)$ satisfying the standing assumptions, and any $\phi\in C^0(M,\R)$, $\cT_t^-\phi(x)$ uniformly converges as $t\rightarrow+\infty$, to a viscosity solution $u^-(x)\in Lip(M,\R)$ of
\be\label{eq:solution}
H(x,\partial_x u^-,u^-)=c(H),\quad a.e. \; x\in M.
\ee
Moreover, $u^-(x)$ satisfies 
\begin{itemize}
\item for any $x,y\in M$, $s<t\in\R$ and any piecewise $C^1-$continuous curve $\gamma$ connecting them, we have 
\[
u^-(y)-u^-(x)\leq \int_s^t L(\gamma(\tau),\dot{\gamma}(\tau),u^-(\gamma(\tau)))+c(H)\ \mbox{d}\tau;
\]
\item for any $x\in M$, there exists a {\sf backward calibrated curve} $\gamma_x^-:(-\infty,0]\rightarrow M$ ending with it, such that for all $s< t\leq 0$, 
\[
u^-(\gamma_x^-(t))-u^-(\gamma_x^-(s))=\int_s^tL(\gamma_x^-(\tau),\dot{\gamma}_x^-(\tau),u^-(\gamma_x^-(\tau)))+c(H)\ \mbox{d}\tau.
\]
\end{itemize}
Such a viscosity solution is also called a {\sf weak KAM solution} \cite{F}.
\end{lem}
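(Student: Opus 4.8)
The plan is to analyze the implicit Lax-Oleinik semigroup $\cT_t^-$ of (\ref{forward semigroup}) directly, deducing uniform convergence from a precompactness argument combined with the dissipative contraction that (H3) provides. First I would establish the basic properties of $\cT_t^-$. That (\ref{forward semigroup}) determines $\cT_t^-\phi$ unambiguously is seen by regarding its right-hand side as a fixed-point relation for the function $s\mapsto\cT_s^-\phi(\gamma(s))$ along admissible curves; existence and uniqueness then follow from a Gronwall argument, using that $L(x,v,\cdot)$ is locally Lipschitz --- by Legendre duality $\partial_uL=-\partial_uH$, so (H3) forces $\partial_uL\in[-\Delta,0)$ on bounded sets. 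The variational formula then yields the semigroup law $\cT_{t+s}^-=\cT_t^-\circ\cT_s^-$, the order-preserving property $\phi\le\psi\Rightarrow\cT_t^-\phi\le\cT_t^-\psi$, $C^0$-continuity of each $\cT_t^-$, and the comparison estimate
\[
\cT_t^-\phi+e^{-\Delta t}a\ \le\ \cT_t^-(\phi+a)\ \le\ \cT_t^-\phi+e^{-\lambda t}a,\qquad a\ge 0,
\]
in which $\lambda>0$ is a uniform lower bound for $\partial_uH$ on whatever compact region of $T^*M\times\R$ the orbits in question remain (such a bound exists by continuity and positivity of $\partial_uH$ once that region is fixed).

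The second step is the a priori analysis. The uniform boundedness of $\{\cT_t^-\phi:t\ge 1\}$ in $C^0(M)$ is exactly where the choice of $c(H)$ as the Ma\~n\'e critical value enters: it is the unique value of the constant for which $\cT_t^-\phi$ neither diverges to $+\infty$ nor to $-\infty$, via comparison with critical sub- and supersolutions (cf. Appendix \ref{a1}). Next, Tonelli's theorem ((H1)-(H2)) makes the infima in (\ref{forward semigroup}) attained by $C^1$ minimizers, and superlinearity together with the bound just obtained forces these minimizers to have uniformly bounded speed for $t\ge 1$; a standard crossing argument then shows $\{\cT_t^-\phi\}_{t\ge 1}$ is equi-Lipschitz, hence precompact in $C^0(M)$ by the Arzel\`a-Ascoli theorem, with all orbits confined to a fixed compact region --- which legitimizes the constant $\lambda$ above. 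Combining the order-preserving property with the comparison estimate (applied to $\phi\le\psi\le\phi+\|\phi-\psi\|_\infty$ and symmetrically) gives the contraction $\|\cT_t^-\phi-\cT_t^-\psi\|_\infty\le e^{-\lambda t}\|\phi-\psi\|_\infty$. Taking $\psi=\cT_s^-\phi$, using the semigroup law and the uniform bound $\|\cT_s^-\phi-\phi\|_\infty\le C$, we get
\[
\|\cT_{t+s}^-\phi-\cT_t^-\phi\|_\infty=\|\cT_t^-(\cT_s^-\phi)-\cT_t^-\phi\|_\infty\ \le\ C\,e^{-\lambda t}\ \longrightarrow\ 0
\]
as $t\to+\infty$, uniformly in $s\ge 0$. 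Hence $t\mapsto\cT_t^-\phi$ is uniformly Cauchy and converges uniformly to some $u^-\in Lip(M,\R)$, the Lipschitz constant being inherited from this step.

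It remains to identify the limit. Letting $t\to+\infty$ in $\cT_{t+s}^-\phi=\cT_t^-(\cT_s^-\phi)$ and invoking $C^0$-continuity of $\cT_t^-$ gives $\cT_t^-u^-=u^-$ for every $t\ge 0$, so $u^-$ is a fixed point of the entire semigroup. The domination inequality (first bullet) is then immediate from (\ref{forward semigroup}): for any piecewise $C^1$ curve $\gamma$ from $x$ to $y$ on $[s,t]$, since $\cT_\tau^-u^-=u^-$ holds identically, evaluating the variational formula along $\gamma$ yields $u^-(y)\le u^-(x)+\int_s^tL(\gamma(\tau),\dot\gamma(\tau),u^-(\gamma(\tau)))+c(H)\ \mbox{d}\tau$; this is also the statement that $u^-$ is a viscosity subsolution of (\ref{eq:solution}). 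For the backward calibrated curve, fix $x$ and choose for each $n$ a Tonelli minimizer $\gamma_n:[-n,0]\to M$ with $\gamma_n(0)=x$ realizing $u^-(x)=\cT_n^-u^-(x)$; these have uniformly bounded speed, so Arzel\`a-Ascoli plus a diagonal extraction produce $\gamma_x^-:(-\infty,0]\to M$ ending at $x$ and calibrated on every compact subinterval. The usual weak KAM argument --- domination together with calibration, tested at points of differentiability --- then upgrades $u^-$ to a genuine viscosity solution of (\ref{eq:solution}).

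I expect the crux to be the dissipative contraction estimate, i.e. converting the bare pointwise positivity of (H3) into an honest exponential contraction of $\cT_t^-$ in the sup norm: the implicitness of (\ref{forward semigroup}) forces the comparison to be carried through a Gronwall inequality for the difference of two solutions evaluated along their respective minimizers, and the rate $\lambda$ must be kept uniform, which is possible only after the a priori $L^\infty$ and Lipschitz bounds have confined every relevant orbit to a fixed compact set. The companion delicate point is those a priori bounds: the uniform boundedness of $\{\cT_t^-\phi\}$ holds precisely because $c(H)$ is taken to be the Ma\~n\'e critical value, and for any other constant $\cT_t^-\phi$ would escape to $\pm\infty$.
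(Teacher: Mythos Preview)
The paper does not prove this lemma; it is quoted from \cite{Su} (with companion results in \cite{WWY}), so there is no in-paper argument to compare your attempt against. Your outline --- well-posedness of the implicit semigroup via Gronwall, monotonicity, a priori $C^0$ and Lipschitz bounds, a sup-norm contraction driven by (H3), and finally a diagonal extraction for the backward calibrated curves --- is essentially the strategy of those references and is structurally sound. The paper's Appendix~\ref{a3} carries out the a priori boundedness and equi-Lipschitz steps explicitly for the $\eps$-parametrised family, by comparing with the classical action $h^t(\cdot,\cdot)+c(H)t$ of $H(\cdot,\cdot,0)$ and using only the sign of $\partial_uL$; the same comparison would serve you here and avoids any circularity with the contraction step.

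One conceptual point deserves correction. Your claim that $c(H)$ is ``the unique value of the constant for which $\cT_t^-\phi$ neither diverges to $\pm\infty$'' is the classical Tonelli picture and does not carry over to the strictly monotone contact setting: under (H3) the dissipation forces convergence of $\cT_t^-\phi$ for \emph{any} constant on the right-hand side (already in the discounted case $H_0(x,p)+u=c$, shifting $c$ merely shifts the unique viscosity solution). In this paper $c(H)$ is fixed as the Ma\~n\'e value of $H(\cdot,\cdot,0)$ not because Lemma~\ref{lem:vs-contact} forces it, but so that the parametrised equations \eqref{eq:hj} degenerate, as $\eps\to0$, to the critical equation \eqref{eq:hj-0}, which admits solutions only at that level. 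Taking $c=c(H)$ does streamline the boundedness proof via the comparison just mentioned, but that is convenience rather than necessity; your proof would go through with the justification for boundedness replaced accordingly.
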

\begin{lem}[Lemma 4.3 of \cite{WWY}]\label{lem:su}
For any $x\in M$, the viscosity solution $u^-$ of (\ref{eq:solution}) is always differentiable along the interior of the backward calibrated curve $\gamma_x^-$ ending with it, namely, $u^-$ is differentiable on the set $\{\gamma_x^-(t)|t\in(-\infty,0)\}$. 
\end{lem}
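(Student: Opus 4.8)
The plan is to run the standard weak-KAM differentiability argument, adapted to the contact setting. Fix $t_0\in(-\infty,0)$, write $x_0:=\gamma_x^-(t_0)$, and let $p(t_0):=\partial_{\dot x}L\big(x_0,\dot{\gamma}_x^-(t_0),u^-(x_0)\big)$ be the momentum carried by the calibrated curve $\gamma_x^-$ at time $t_0$. I will show that $p(t_0)$ lies simultaneously in the viscosity superdifferential $D^+u^-(x_0)$ and the subdifferential $D^-u^-(x_0)$. Since a continuous function cannot have both $D^+u^-(x_0)$ and $D^-u^-(x_0)$ nonempty unless it is differentiable at $x_0$ — in which case both reduce to $\{d_{x_0}u^-\}$ — this proves the lemma, and in addition identifies $d_{x_0}u^-=p(t_0)$. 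Throughout I use, from \cite{Su} and Lemma~\ref{lem:vs-contact}, that $\gamma_x^-$ is an action minimizer, hence $C^1$, solving \eqref{eq:ode} together with its momentum $p(\cdot)$; and that $u^-$ admits the implicit representation $u^-(x)=\inf_{z\in M}h_{z,u^-(z)}(x,t)$ for every $t\ge 0$, where $h_{z,u_0}(\cdot,t)$ are the fundamental solutions of the implicit variational principle of \cite{Su,WWY}, which for small $t$ are $C^1$ jointly in $(z,u_0,x)$, strictly increasing in $u_0$ with derivative bounded away from $0$ (by (H3)), and whose $x$-gradient equals the terminal momentum of the corresponding minimizing curve.

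\emph{Superdifferential.} Calibration of $\gamma_x^-$ on a short past segment $[t_0-\delta,t_0]$ says exactly that the infimum defining $u^-(x_0)=\inf_z h_{z,u^-(z)}(x_0,\delta)$ is attained at $z=\gamma_x^-(t_0-\delta)$, along $\gamma_x^-|_{[t_0-\delta,t_0]}$; on the other hand $u^-(y)\le h_{\gamma_x^-(t_0-\delta),\,u^-(\gamma_x^-(t_0-\delta))}(y,\delta)$ for every $y\in M$ by that same representation. Hence, for $\delta$ small, the $C^1$ function $y\mapsto h_{\gamma_x^-(t_0-\delta),\,u^-(\gamma_x^-(t_0-\delta))}(y,\delta)$ lies above $u^-$ near $x_0$ and equals it at $x_0$, so its differential at $x_0$, namely the terminal momentum $p(t_0)$, belongs to $D^+u^-(x_0)$. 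This half uses only $t_0\le 0$.

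\emph{Subdifferential.} Here I genuinely use $t_0<0$, so that the \emph{forward} calibrated segment $\gamma_x^-|_{[t_0,t_0+\delta]}$ with $\delta<|t_0|$ is also available. Then $u^-(\gamma_x^-(t_0+\delta))=\inf_z h_{z,u^-(z)}(\gamma_x^-(t_0+\delta),\delta)$ is attained at $z=x_0$, while $u^-(\gamma_x^-(t_0+\delta))\le h_{y,u^-(y)}(\gamma_x^-(t_0+\delta),\delta)$ for every $y$. Since $v\mapsto h_{y,v}(\gamma_x^-(t_0+\delta),\delta)$ is strictly increasing and $C^1$ with positive derivative, the implicit function theorem yields a $C^1$ function $G$ defined near $x_0$ by $h_{y,G(y)}(\gamma_x^-(t_0+\delta),\delta)=u^-(\gamma_x^-(t_0+\delta))$; then $G(x_0)=u^-(x_0)$, and monotonicity in the $u_0$-slot forces $G(y)\le u^-(y)$. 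Thus $G$ touches $u^-$ from below at $x_0$, so $dG(x_0)\in D^-u^-(x_0)$, which is therefore nonempty. Together with the previous paragraph, $u^-$ is differentiable at $x_0=\gamma_x^-(t_0)$, and since $t_0\in(-\infty,0)$ was arbitrary the differentiability set contains the whole interior $\{\gamma_x^-(t):t\in(-\infty,0)\}$.

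The entire difficulty is packed into the structural properties of the fundamental solutions $h_{z,u_0}$ invoked above — their short-time joint $C^1$-regularity in the endpoints, the identification of the $x$-gradient with the terminal momentum, and the strict monotonicity in $u_0$ with derivative bounded below (this is where (H3) is used). These are precisely the outputs of the implicit variational principle of \cite{Su,WWY}, and routing the argument through it is what keeps the $u$-feedback internal to $h$. A more hands-on route — comparing $\gamma_x^-$ directly with nearby curves via the domination inequality of Lemma~\ref{lem:vs-contact} and computing a first variation — would instead have to control an a priori $O(|y-x_0|)$ error coming from evaluating $L$ along $u^-\circ\eta$ on the comparison curves $\eta$, and that is the one point that is not routine. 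Granting the cited properties of $h$, the rest of the proof is bookkeeping.
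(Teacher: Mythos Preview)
The paper does not supply a proof of this lemma; it simply records it as Lemma~4.3 of \cite{WWY} and moves on. So there is no ``paper's own proof'' to compare against beyond the citation.

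Your argument is correct and is the natural adaptation of Fathi's classical weak-KAM differentiability proof to the contact setting: a $C^1$ upper support at $x_0$ from the short backward segment of $\gamma_x^-$, and a $C^1$ lower support from the short forward segment (available precisely because $t_0<0$), hence both $D^+u^-(x_0)$ and $D^-u^-(x_0)$ are nonempty. The one genuinely contact-specific step---and you flag it yourself---is that in the subdifferential half the comparison inequality picks up an $O(|y-x_0|)$ term from the $u$-slot of $L$, which would spoil a naive first-variation argument; you correctly absorb this by working with the fundamental solutions $h_{z,u_0}$ of the implicit variational principle and invoking the implicit function theorem, with (H3) providing the strict monotonicity in $u_0$ needed for that step. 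One minor slip: your plan announces that $p(t_0)$ will be shown to lie in both $D^\pm u^-(x_0)$, but in the subdifferential paragraph you only exhibit $dG(x_0)\in D^-u^-(x_0)$ without identifying it with $p(t_0)$. This is harmless---nonemptiness of both is enough---but you may want to align the statement with what you actually prove.
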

\vspace{5pt}

\subsection{Viscosity solutions of parametrized contact Hamiltonians} Now we parameterize the Hamiltonian by $H(x,p,\epsilon u)$ with $\epsilon\in[0,1]$. For $\eps\in(0,1]$, due to Lemma \ref{lem:vs-contact}, we get a family of viscosity solutions $\{u_\epsilon^-\}$ of (\ref{eq:hj}), which  satisfies:


\begin{lem}\label{lem:equi-lip}
$\{u_\epsilon^-\}$ is uniformly bounded, and equi-Lipschitz for all $\epsilon\in(0,1]$ with the Lipschitz constant $\kappa$ depending only on $H(x,p,0)$.
\end{lem}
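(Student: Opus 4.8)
The plan is to exploit the variational representation coming from the backward Lax-Oleinik semigroup $\cT_t^-$ together with the monotonicity assumption (H3), which forces an a priori two-sided control on the effective Lagrangian $L(x,\dot x,\epsilon u)$ once we know the solution is bounded, and to bootstrap this into the Lipschitz bound. Concretely, I would first establish the \emph{uniform bound}. Fix a reference point and use the fact that $u_\epsilon^-$ is a fixed point of $\cT_t^-$ (or equivalently the weak KAM solution from Lemma \ref{lem:vs-contact} for the Hamiltonian $H(x,p,\epsilon u)$, noting $c(H(x,p,\epsilon u))=c(H)$ by our choice of $c(H)$ as the Mañé critical value — this should be recorded or is implicit in the setup). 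Suppose $\max_M u_\epsilon^- = u_\epsilon^-(x_\epsilon)$. Take the backward calibrated curve $\gamma^-_{x_\epsilon}$ ending at $x_\epsilon$; along it, $u_\epsilon^-$ is nondecreasing in an averaged sense coming from the calibration identity, and by (H3) — since $\partial_u H>0$ means $\partial_u L<0$, hence $L(x,\dot x,\epsilon u)\le L(x,\dot x,0)$ when $u\ge 0$ — one gets that on the region where $u_\epsilon^-\ge 0$ the curve calibrates against the \emph{fixed} Lagrangian $L(x,\dot x,0)+c(H)$, which by definition of $c(H)$ has nonnegative action over any closed loop; a standard argument (project along the calibrated curve for a long time, use compactness of $M$ to find near-returns) then bounds $\max_M u_\epsilon^-$ from above by a constant depending only on $H(x,p,0)$ and $\Delta$. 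The lower bound $\min_M u_\epsilon^-$ is handled symmetrically using that $L(x,\dot x,\epsilon u)\ge L(x,\dot x,0)$ where $u_\epsilon^-\le 0$, together with the subsolution inequality (the first bullet of Lemma \ref{lem:vs-contact}).

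Granting a uniform bound $\|u_\epsilon^-\|_\infty\le C_0$ with $C_0$ independent of $\epsilon\in(0,1]$, the \emph{equi-Lipschitz} estimate follows from the domination inequality in the first bullet of Lemma \ref{lem:vs-contact}: for any $x,y\in M$ and any $C^1$ curve $\gamma:[0,T]\to M$ from $x$ to $y$,
\[
u_\epsilon^-(y)-u_\epsilon^-(x)\le \int_0^T L(\gamma,\dot\gamma,\epsilon u_\epsilon^-(\gamma))+c(H)\,\mbox{d}\tau.
\]
Since $|\epsilon u_\epsilon^-(\gamma(\tau))|\le C_0$, the integrand is bounded above by $\sup\{L(x,v,w)+c(H): x\in M,\ |v|_x\le 1,\ |w|\le C_0\}=:A_0$, a constant depending only on $H$ restricted to the compact slab $|u|\le C_0$ (and one checks this constant can be taken to depend only on $H(x,p,0)$ up to the harmless factor $e^{\Delta C_0}$-type correction given by (H3), which is uniformly controlled). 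Choosing $\gamma$ to be a unit-speed minimizing geodesic from $x$ to $y$, so $T=d(x,y)$, yields $u_\epsilon^-(y)-u_\epsilon^-(x)\le A_0\, d(x,y)$; swapping $x$ and $y$ gives the reverse, so $u_\epsilon^-$ is Lipschitz with constant $\kappa:=A_0$, uniform in $\epsilon$.

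The main obstacle is making the dependence of the constants \emph{genuinely independent of $\epsilon$}, and in particular tracing through that they depend only on $H(x,p,0)$ as claimed, rather than on the whole family $H(x,p,\epsilon u)$. The delicate point is the uniform bound on $\|u_\epsilon^-\|_\infty$: one must rule out that the solutions drift off to $\pm\infty$ as $\epsilon\to 0$, and the monotonicity (H3) is exactly what provides the restoring mechanism — where $u_\epsilon^-$ is large and positive, increasing $u$ increases $H$, forcing $\partial_x u_\epsilon^-$ (hence the Lagrangian cost) to compensate, which caps the solution; quantifying this via the calibrated curves and the closed-loop nonnegativity of $L(\cdot,\cdot,0)+c(H)$ is the technical heart. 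Once the $L^\infty$ bound is uniform, everything else is a one-line consequence of the domination inequality and the smoothness of $L$ on the fixed compact set $M\times\{|v|\le 1\}\times\{|u|\le C_0\}$.
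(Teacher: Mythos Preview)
Your equi-Lipschitz step, assuming the uniform $L^\infty$ bound, is correct and is exactly what the paper does (unit-speed geodesic plus the domination inequality; the paper's constant is $C_1+\Delta K+c(H)$). The genuine gap is in the uniform boundedness, specifically the \emph{upper} bound. Running the backward calibrated curve $\gamma^-_{x_\epsilon}$ from the maximizer $x_\epsilon$ and invoking near-returns controls only the oscillation $u_\epsilon^-(x_\epsilon)-u_\epsilon^-(\gamma^-_{x_\epsilon}(-T))$; since $x_\epsilon$ realizes the maximum, the resulting inequality is vacuous. Even if you locate a zero-crossing $u_\epsilon^-(\gamma^-_{x_\epsilon}(-s_0))=0$ and use $L(\cdot,\cdot,\epsilon u_\epsilon^-)\le L(\cdot,\cdot,0)$ on $[-s_0,0]$, you obtain
\[
u_\epsilon^-(x_\epsilon)\ \le\ \int_{-s_0}^{0}\! \Big(L(\gamma^-_{x_\epsilon},\dot\gamma^-_{x_\epsilon},0)+c(H)\Big)\,d\tau,
\]
and this right-hand side is uncontrolled from above: $\gamma^-_{x_\epsilon}$ minimizes the \emph{contact} action, not the classical one, and $s_0$ may be arbitrarily large. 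You also cannot appeal to a velocity bound on $\gamma^-_{x_\epsilon}$ here, since Lemma~\ref{lem:com-cal} rests on the very estimate you are proving. (Your lower-bound sketch has a milder version of the same anchoring problem: the ``subsolution inequality'' only gives one-sided control, so you really need calibration plus a zero-crossing, and you have not explained why a zero-crossing must occur.)

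The paper repairs this in two ways. First, it works not with $u_\epsilon^-$ directly but with the semigroup $\cT^{\epsilon-}_t\phi$ started from a fixed $\phi$ with $\|\phi\|\le 1$; the initial datum supplies an absolute anchor whenever no zero-crossing appears. Second --- and this is the asymmetry your sketch misses --- for the upper bound the paper does not follow the contact minimizer at all. It instead chooses a test curve $\beta$ that minimizes the \emph{classical} action $h^t(\cdot,x)$ for $H(\cdot,\cdot,0)$, so that $\int L(\beta,\dot\beta,0)+c(H)$ is bounded above by the standard estimates on $h^t+c(H)t$; one then applies the domination inequality for $\cT^{\epsilon-}_t\phi$ along $\beta$ and tracks the last zero-crossing of $s\mapsto \cT^{\epsilon-}_s\phi(\beta(s))$. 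For the lower bound the contact minimizer \emph{is} the right curve, because there the comparison goes the other way and one only needs the classical action as a lower bound for the integral, which holds along any curve.
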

\begin{proof}
The proof is postponed to Appendix \ref{a3}.
\end{proof}


\begin{lem}\label{lem:com-cal}
For any $\eps\in(0,1]$ and any $x\in M$, the backward calibrated curve $\gamma_{x,\eps}^-:(-\infty,0]\rightarrow M$ associated with $u_\eps^-$ has a uniformly bounded velocity, i.e. there exists a constant $K>0$ depending only on $H(x,p,0)$, such that 
$$
|\dot\gamma_{x,\eps}^-(t)|\leq K, \quad\forall \ t\in(-\infty,0).
$$ 
\end{lem}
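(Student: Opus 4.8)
The plan is to lift the calibrated curve to the contact phase space and read off the velocity bound on the Hamiltonian side, where the lifted curve is a genuine orbit of (\ref{eq:ode}). Set $H_\eps(x,p,u):=H(x,p,\eps u)$; for each fixed $\eps\in(0,1]$ this Hamiltonian satisfies the standing assumptions, equation (\ref{eq:hj}) is exactly $H_\eps(x,\partial_x u_\eps^-,u_\eps^-)=c(H)$, and the Legendre dual of $H_\eps$ in $p$ is $L(x,\dot x,\eps u)$. Applying Lemma~\ref{lem:vs-contact} to $H_\eps$ gives the calibration identity for $\gamma_{x,\eps}^-$, and by the contact variational principle of \cite{Su,WWY} together with Lemma~\ref{lem:su} (which guarantees that $u_\eps^-$ is differentiable along $\{\gamma_{x,\eps}^-(t):t\in(-\infty,0)\}$) the lifted curve
\[
t\longmapsto\big(\gamma_{x,\eps}^-(t),\ p_\eps(t),\ u_\eps^-(\gamma_{x,\eps}^-(t))\big),\qquad p_\eps(t):=\partial_x u_\eps^-\big(\gamma_{x,\eps}^-(t)\big),
\]
is an orbit of the contact Hamilton equations (\ref{eq:ode}) for $H_\eps-c(H)$ on the interval $(-\infty,0)$. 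Since the $\dot x$-equation in (\ref{eq:ode}) is unaffected by the constant shift, this yields
\[
\dot\gamma_{x,\eps}^-(t)=\partial_p H\big(\gamma_{x,\eps}^-(t),\ \partial_x u_\eps^-(\gamma_{x,\eps}^-(t)),\ \eps\, u_\eps^-(\gamma_{x,\eps}^-(t))\big),\qquad t\in(-\infty,0).
\]

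Next I would use Lemma~\ref{lem:equi-lip} to confine the right-hand side to a fixed compact set. The equi-Lipschitz estimate gives $|\partial_x u_\eps^-(\gamma_{x,\eps}^-(t))|\le\kappa$ wherever the derivative exists, in particular along the interior of $\gamma_{x,\eps}^-$; the uniform bound gives $\|u_\eps^-\|_\infty\le C$, and since $\eps\le1$ also $|\eps\,u_\eps^-(\gamma_{x,\eps}^-(t))|\le C$, with $\kappa$ and $C$ depending only on $H(x,p,0)$. Hence, for all $\eps\in(0,1]$ and all $t\in(-\infty,0)$, the triple $\big(\gamma_{x,\eps}^-(t),p_\eps(t),\eps u_\eps^-(\gamma_{x,\eps}^-(t))\big)$ stays in the fixed compact set
\[
\mathcal{K}:=\big\{(x,p,u)\in T^*M\times\R\ :\ |p|_x\le\kappa,\ |u|\le C\big\}.
\]
Since $\partial_p H$ is continuous, $K:=\max_{(x,p,u)\in\mathcal{K}}|\partial_p H(x,p,u)|_x$ is finite, it is uniform over $\eps\in(0,1]$, and (like the constants of Lemma~\ref{lem:equi-lip}) controlled by $H(x,p,0)$ through $\kappa,C$; combining with the displayed formula for $\dot\gamma_{x,\eps}^-$ gives $|\dot\gamma_{x,\eps}^-(t)|\le K$ for every $t\in(-\infty,0)$, which is the assertion.

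The step that requires the most care is the first one: justifying that the a priori merely absolutely continuous backward calibrated curve is in fact a smooth orbit of (\ref{eq:ode}) with $p_\eps=\partial_x u_\eps^-\circ\gamma_{x,\eps}^-$ — this rests on the regularity of Tonelli minimizers for $H_\eps$ and on Lemma~\ref{lem:su}, and is exactly where the variational principle of \cite{Su,WWY} enters. The other delicate aspect is that the resulting bound is genuinely independent of $\eps$, which is precisely what Lemma~\ref{lem:equi-lip} supplies, since without $\eps$-uniform control of $\|u_\eps^-\|_\infty$ and of the Lipschitz constants the set $\mathcal{K}$ would degenerate as $\eps\to0_+$. (A purely Lagrangian alternative would restrict $\gamma_{x,\eps}^-$ to unit subintervals, observe that it minimizes there $\xi\mapsto\int L(\xi,\dot\xi,\eps u_\eps^-(\xi))+c(H)$ with minimal value bounded by $2\|u_\eps^-\|_\infty$, and invoke the standard Tonelli a priori compactness estimate using the $\eps$-uniform superlinearity of $L(\cdot,\cdot,\eps u_\eps^-(\cdot))$; the Hamiltonian argument above is cleaner because the lifted curve is a bona fide $C^1$ orbit of (\ref{eq:ode}).)
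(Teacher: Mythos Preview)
Your proof is correct and follows essentially the same route as the paper: first use Lemma~\ref{lem:su} (differentiability of $u_\eps^-$ along the calibrated curve, yielding that the lifted curve solves (\ref{eq:ode})) to obtain the identity $\dot\gamma_{x,\eps}^-(t)=\partial_p H(\gamma_{x,\eps}^-(t),\partial_x u_\eps^-(\gamma_{x,\eps}^-(t)),\eps u_\eps^-(\gamma_{x,\eps}^-(t)))$, then invoke the uniform boundedness and equi-Lipschitz constant from Lemma~\ref{lem:equi-lip} to confine the arguments to a compact set and conclude. Your version is simply more explicit about the compact set $\mathcal{K}$ and the justification of the orbit formula, but the argument is the same.
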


\proof
Due to Lemma \ref{lem:su}, 
$$
\dot{\gamma}_{x,\epsilon}(t)=\frac{\partial H}{\partial p}(\gamma_{x,\epsilon}^-(t),\partial_xu_{\epsilon}^-(\gamma_{x,\epsilon}^-(t)),\epsilon u_{\epsilon}^-(\gamma_{x,\epsilon}^-(t))), \ t\in (-\infty,0).
$$
Note that $u_{\epsilon}^-$ is proved to be uniformly bounded and equi-Lipschitz due to Lemma \ref{lem:equi-lip}, then
$\dot{\gamma}_{x,\epsilon}^-$ is also uniformly bounded.
\endproof


\vspace{20pt}

\section{Convergence of the viscosity solutions of contact Hamilton-Jacobi equations}\label{s3}
\vspace{20pt}

In this section we prove Theorem \ref{thm:1}, namely, $u_\eps^-$ converges as $\eps\rightarrow 0_+$, to a particular solution $u_0^-$ of (\ref{eq:hj-0}). Due to Lemma \ref{lem:equi-lip} and Lemma \ref{lem:vs-contact}, any accumulating function $u_0^-$ of $u_\eps^-$ as $\eps\rightarrow 0_+$ will be a viscosity solution of (\ref{eq:hj-0}). If we could find a unique dynamic interpretation of $u_0^-$, it has to be unique. For readability we decompose the proof into a list of progressive propositions.


\begin{prop}\label{prop:geq}
For any ergodic Mather measure $\wt\mu\in ex(\wt{\mathfrak M})$ (defined in Appendix \ref{a1}), and any accumulating function $u_0^-(x)$ of $\{u_\eps^-\}$ as $\eps\rightarrow 0_+$, there holds
\be
\int_{TM}\frac{\partial L}{\partial u}(x,v,0)\cdot u_0^-(x)d\wt\mu\geq 0.
\ee
\end{prop}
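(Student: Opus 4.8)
The plan is to test the defining subsolution inequality of $u_\eps^-$ against the ergodic Mather measure $\wt\mu$ and extract the desired sign in the limit. Concretely, fix $\wt\mu\in ex(\wt{\mathfrak M})$ with its supporting Mather set carrying a flow of $H(x,p,0)$; pick a generic point $(x,v)$ in the support and let $\gamma$ be the projection of the corresponding orbit. Since $\wt\mu$ is invariant for the critical Euler--Lagrange flow at $\eps=0$, $\gamma$ is a curve along which the calibration identity for $u_0^-$ holds (it is contained in the projected Aubry/Mather set). For $\eps>0$, the weak KAM inequality in Lemma \ref{lem:vs-contact} applied to $u_\eps^-$ over $[0,t]$ along $\gamma$ gives
\[
u_\eps^-(\gamma(t))-u_\eps^-(\gamma(0))\leq \int_0^t L(\gamma(s),\dot\gamma(s),\eps u_\eps^-(\gamma(s)))+c(H)\,\mathrm ds .
\]

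Next I would subtract the corresponding $\eps=0$ calibration identity for a chosen accumulating $u_0^-$; but to stay at fixed $\eps$ I would instead compare with the $\eps=0$ subsolution inequality and then Taylor-expand $L$ in its third slot. Using (H3) (so $\partial_uL=-\partial_uH\cdot(\text{Legendre factor})<0$, with the precise relation $\frac{\partial L}{\partial u}(x,v,0)=-\frac{\partial H}{\partial u}(x,\partial_p\text{-conjugate},0)$), write
\[
L(\gamma(s),\dot\gamma(s),\eps u_\eps^-(\gamma(s)))=L(\gamma(s),\dot\gamma(s),0)+\eps\,\frac{\partial L}{\partial u}(\gamma(s),\dot\gamma(s),0)\,u_\eps^-(\gamma(s))+o(\eps),
\]
where the $o(\eps)$ is uniform thanks to Lemma \ref{lem:equi-lip} (uniform bound on $u_\eps^-$) and $C^2$-smoothness of $H$. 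Integrate over one period (or over $[0,t]$ and divide by $t$), use that the left-hand telescoping term $\tfrac1t(u_\eps^-(\gamma(t))-u_\eps^-(\gamma(0)))$ is $O(1/t)$, and that $\tfrac1t\int_0^t L(\gamma(s),\dot\gamma(s),0)+c(H)\,\mathrm ds\to 0$ because $\gamma$ lies in the Mather set of the critical Lagrangian (its average action equals $-c(H)=\alpha(0)$... i.e. $\int L_0+c(H)\,d\wt\mu=0$). Passing $t\to\infty$ along the ergodic orbit and invoking Birkhoff's ergodic theorem converts time averages into $\wt\mu$-integrals, yielding
\[
0\leq \eps\int_{TM}\frac{\partial L}{\partial u}(x,v,0)\,u_\eps^-(x)\,d\wt\mu+o(\eps).
\]

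Finally, divide by $\eps>0$ and let $\eps\to 0_+$ along the subsequence defining $u_0^-$; by uniform convergence $u_\eps^-\to u_0^-$ (equi-Lipschitz plus Ascoli) the integrand converges uniformly, so the integral passes to the limit and gives $\int_{TM}\frac{\partial L}{\partial u}(x,v,0)\,u_0^-(x)\,d\wt\mu\geq 0$, which is the claim. The main obstacle I anticipate is making the interchange of the $t\to\infty$ and $\eps\to 0$ limits rigorous: one must either keep $\eps$ fixed while sending $t\to\infty$ (using that $\gamma$ and $\wt\mu$ are objects of the $\eps=0$ system, so the ergodic average is clean) and only then let $\eps\to 0$, controlling the $o(\eps)$ error term uniformly in $t$ via the equi-boundedness in Lemma \ref{lem:equi-lip}; care is also needed because the calibrated curve used should be one for the limiting system, not for $u_\eps^-$ — so I would work with orbits in $\mathrm{supp}\,\wt\mu$ directly rather than with $\gamma_{x,\eps}^-$, and only use $u_\eps^-$ through its (in)equalities, never through its own calibrated curves.
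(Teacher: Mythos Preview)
Your proposal is correct and follows essentially the same route as the paper: test the weak KAM subsolution inequality for $u_\eps^-$ along a $\wt\mu$-generic orbit of the $\eps=0$ Euler--Lagrange flow, expand $L$ in the $u$-slot, divide by $t$ and apply Birkhoff, then divide by $\eps$ and pass to the limit. The only cosmetic difference is that the paper writes the expansion exactly as $L(\cdot,\cdot,\eps u_\eps^-)-L(\cdot,\cdot,0)=\eps u_\eps^-\int_0^1\partial_uL(\cdot,\cdot,\tau\eps u_\eps^-)\,d\tau$ and passes to the limit by dominated convergence, whereas you use a first-order Taylor remainder $o(\eps)$; since the orbit lives in the compact Mather set and $u_\eps^-$ is uniformly bounded (Lemma~\ref{lem:equi-lip}), your $o(\eps)$ is in fact $O(\eps^2)$ uniformly in $s$, so both versions go through.
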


\begin{proof}
By the Birkhoff Ergodic Theorem, for any ergodic Mather measure $\wt\mu$ there exists a generic Euler-Lagrange flow $(\gamma(s),\dot{\gamma}(s)):=\varphi_{L,0}^s(\gamma(0),\dot{\gamma}(0))$ such that 
$$
\lim_{T\to +\infty}\frac{1}{T}\int^T_0L(\gamma(t),\dot\gamma (t),0)\mbox{d}t=\int_{TM} L(x,v,0)\mbox{d}\wt\mu=-c(H).
$$ 
 Let $u_0^-$ be the uniform limit of a sequence $u_{\epsilon_n}^-$ with $\eps_n\rightarrow 0_+$, then 
\begin{align*}
u_{\epsilon_n}^-(\gamma(T))-u^-_{\epsilon_n}(\gamma(0))&\leq\int^T_0L(\gamma(s),\dot{\gamma}(s),\epsilon_n u^-_{\epsilon_n}(\gamma(s)))+c(H)\mbox{d}s \\	
&=\int^T_0L(\gamma(s),\dot{\gamma}(s),0)+c(H)\mbox{d}s\\
&\ \  \ \ +\int^T_0L(\gamma(s),\dot{\gamma}(s),\epsilon_n u^-_{\epsilon_n}(\gamma(s)))-L(\gamma(s),\dot{\gamma}(s),0)\mbox{d}s.
\end{align*}
Note that
\begin{align*}
&\int^T_0L(\gamma(s),\dot{\gamma}(s),\epsilon_n u^-_{\epsilon_n}(\gamma(s)))-L(\gamma(s),\dot{\gamma}(s),0)\mbox{d}s\\
&= \int^T_0\int^1_0\frac{\mbox{d}}{\mbox{d}\tau}L(\gamma(s),\dot{\gamma}(s),\tau \epsilon_n u^-_{\epsilon_n}(\gamma(s)))\mbox{d}\tau\mbox{d}s\\
&= \int^1_0\int^T_0\frac{\partial L}{\partial u}(\gamma(s),\dot{\gamma}(s),\tau \epsilon_n u^-_{\epsilon_n}(\gamma(s))\cdot \epsilon_n u^-_{\epsilon_n}(\gamma(s)) \mbox{d}s\mbox{d}\tau.
\end{align*}
We derive 
\begin{align*}
	\frac{1}{T}\bigg(u^-_{\epsilon_n}(\gamma(T))-u^-_{\epsilon_n}(\gamma(0))\bigg)&\leq \frac{1}{T}\int^T_0L(\gamma(s),\dot{\gamma}(s),0)\mbox{d}s+c(H)\\
	&\ \ \ \ \  \ +\int^1_0\frac{1}{T}\int^T_0\frac{\partial L}{\partial u}(\gamma(s),\dot{\gamma}(s),\tau \epsilon_n u^-_{\epsilon_n}(\gamma(s))\cdot \epsilon_n u^-_{\epsilon_n}(\gamma(s)) \mbox{d}s\mbox{d}\tau.
\end{align*}
By the Birkhoff Ergodic Theorem, 
\be
& &\lim_{T\to+\infty}\frac{1}{T}\int^T_0\frac{\partial L}{\partial u}(\gamma(s),\dot{\gamma}(s),\tau \epsilon_n u^-_{\epsilon_n}(\gamma(s))\cdot \epsilon_n u^-_{\epsilon_n}(\gamma(s)) \mbox{d}s\nonumber\\
&=&\int_{TM}\frac{\partial L}{\partial u}(x,v,\tau \epsilon_n u^-_{\epsilon_n}(x))\cdot \epsilon_n u^-_{\epsilon_n}(x)\mbox{d}\wt\mu.\nonumber
\ee
Hence,
$$
0=\lim_{T\to+\infty}\frac{1}{T\eps_n}\bigg(u^-_{\epsilon_n}(\gamma(T))-u^-_{\epsilon_n}(\gamma(0))\bigg)\leq \int^1_0\big(\int_{TM}\frac{\partial L}{\partial u}(x,v,\tau \epsilon_n u^-_{\epsilon_n}(x))\cdot  u^-_{\epsilon_n}(x)\mbox{d}\wt\mu\big)\mbox{d}\tau.
$$
By the Dominant Convergence Theorem, as $n$ tends to $+\infty$ we derive 
$$
\int_{TM}\frac{\partial L}{\partial u}(x,v,0)u_0^-(x)\mbox{d}\wt\mu\geq 0
$$
then the assertion follows.
\end{proof}

This proposition inspires us to define the following set:
\begin{defn}
Let's denote by $\cF_-$ the set of all {\sf $c(H)-$viscosity subsolution} (see Appendix \ref{a1} for the definition) $u:M\rightarrow \R$ of (\ref{eq:hj-0}) such that 
\be
\int_{TM}\frac{\partial L}{\partial u}(x,v,0)\cdot u(x)d\wt\mu\geq 0,\quad\forall\ \wt\mu\in\wt{\mathfrak M}.
\ee
\end{defn}
\begin{rmk}
Recall that Proposition \ref{prop:geq} indicates that $\cF_-$ is nonempty since any accumulating function $u_0^-$ of $\{u_\eps^-\}$ as $\eps\rightarrow 0_+$ is an element of $\cF_-$.
\end{rmk}
\begin{lem}
The set $\cF_-$ is uniformly bounded from above, i.e.
\[
\sup\{u(x)|\ \forall\ x\in M,\ u\in\cF_-\}<+\infty.
\]
\end{lem}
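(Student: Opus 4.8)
The plan is to combine a uniform Lipschitz (hence oscillation) bound valid for \emph{every} critical subsolution with a one-sided estimate on $\min_M u$ extracted from the integral constraint defining $\cF_-$.

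First I would observe that any $u\in\cF_-$ is in particular a $c(H)$-viscosity subsolution of (\ref{eq:hj-0}), so $H(x,\partial_x u(x),0)\le c(H)$ for a.e. $x\in M$. By superlinearity (H2) together with the compactness of $M$, the sublevel set $\{(x,p)\in T^*M:\ H(x,p,0)\le c(H)\}$ is bounded, whence $|\partial_x u(x)|\le\kappa$ a.e. for a constant $\kappa$ depending only on $H(\cdot,\cdot,0)$ --- this is precisely the Tonelli estimate behind Lemma \ref{lem:equi-lip}. Thus $u$ is $\kappa$-Lipschitz for the Riemannian distance, and since $M$ is compact and connected, $\max_M u-\min_M u\le\kappa D$ where $D:=\mathrm{diam}(M)<+\infty$. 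Consequently it suffices to bound $\min_M u$ from above, uniformly in $u\in\cF_-$.

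Second I would use the sign of $\partial_u L$. By Legendre duality and the envelope theorem, $\frac{\partial L}{\partial u}(x,v,0)=-\frac{\partial H}{\partial u}(x,p,0)$ at conjugate points $(x,p)\leftrightarrow(x,v)$, so (H3) gives $-\Delta\le\frac{\partial L}{\partial u}(x,v,0)<0$ everywhere on $TM$. Fix any Mather measure $\wt\mu\in\wt{\mathfrak M}$ (the set is nonempty); then $A_{\wt\mu}:=\int_{TM}\frac{\partial L}{\partial u}(x,v,0)\,d\wt\mu<0$, since the integrand is continuous and strictly negative on the support of the probability measure $\wt\mu$. Writing $u=\min_M u+(u-\min_M u)$ with $u-\min_M u\ge0$, the defining inequality of $\cF_-$ reads
\[
0\ \le\ \int_{TM}\frac{\partial L}{\partial u}(x,v,0)\,u(x)\,d\wt\mu\ =\ A_{\wt\mu}\cdot\min_M u\ +\ \int_{TM}\frac{\partial L}{\partial u}(x,v,0)\,\bigl(u(x)-\min_M u\bigr)\,d\wt\mu,
\]
and the last integral is $\le0$, being the integral of the product of a negative function with a nonnegative one. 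Hence $A_{\wt\mu}\cdot\min_M u\ge0$, and since $A_{\wt\mu}<0$ this forces $\min_M u\le0$. Combined with the oscillation bound this yields $u(x)\le\max_M u\le\kappa D$ for every $x\in M$ and every $u\in\cF_-$, which is the claim.

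The argument is essentially soft once these two ingredients are in place; the only point deserving a line of justification is the equi-Lipschitz bound for \emph{arbitrary} critical subsolutions (rather than for the specific solutions $u_\eps^-$), but this is the same estimate already invoked for Lemma \ref{lem:equi-lip}, so I do not anticipate a genuine obstacle.
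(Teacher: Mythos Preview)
Your proof is correct and follows essentially the same approach as the paper: both combine the equi-Lipschitz bound for all critical subsolutions (which the paper cites from Proposition~\ref{prop:sub-sol} rather than Lemma~\ref{lem:equi-lip}) with the observation that the sign of $\partial_u L$ together with the defining inequality of $\cF_-$ forces $\min_M u\le 0$, giving $\max_M u\le\kappa\,\mathrm{diam}(M)$. Your decomposition $u=\min_M u+(u-\min_M u)$ is just an unpacking of the paper's one-line quotient estimate $\min_M u\le\dfrac{\int\partial_u L\cdot u\,d\wt\mu}{\int\partial_u L\,d\wt\mu}\le 0$.
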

\begin{proof}
Recall that the set of $c(H)-$viscosity subsolutions of (\ref{eq:hj-0}) is equi-Lipschitz with a Lipschitz constant $\kappa$ (see Proposition \ref{prop:sub-sol} in Appendix \ref{a1}). For any $u\in\cF_-$, we have
\[
\min_{x\in M}u=\frac{\int_{TM}\frac{\partial L}{\partial u}(x,v,0)\cdot \min_{x\in M} ud\wt\mu}{\int_{TM}\frac{\partial L}{\partial u}(x,v,0)d\wt\mu}\leq \frac{\int_{TM}\frac{\partial L}{\partial u}(x,v,0)\cdot u(x)d\wt\mu}{\int_{TM}\frac{\partial L}{\partial u}(x,v,0)d\wt\mu}\leq 0.
\]
Therefore, $\max_{x\in M}u\leq\max u-\min u$. Due to the equi-Lipschitzness and the compactness of $M$, we have $\max u-\min u\leq \kappa\ \text{diam}(M)<+\infty$.
\end{proof}
As $\cF_-$ is now upper bounded, we can define a supreme subsolution by
\be\label{eq:def-1}
u_0^*:=\sup_{u\in \cF_-} u.
\ee
Later we will see that $u_0^*$ is actually a viscosity solution of (\ref{eq:hj-0}) and the unique accumulating function of $\{u_\eps^-\}$ as $\eps\rightarrow 0_+$.


\begin{prop}\label{prop:leq}
Let $\om$ be any subsolution of (\ref{eq:hj-0}). For
any $x\in M$, we have
\be
u_\eps^-(x)\geq \om(x)+\int_{TM}\om(y)\int_0^1\frac{\partial L}{\partial u}\Big(y,v,\tau\eps u_\eps^-(y)\Big)d\tau d\wt \mu_x^\eps(y,v).
\ee
where $\wt\mu_x^\eps(\cdot,\cdot)$ is a finite measure defined by 
\be\label{eq:asy-mea}
& &\int_{TM}f(y,v) d \wt\mu_x^\eps(y,v)\\
&=&\eps \int_{-\infty}^0 f(\gamma_{x,\eps}^-(t),\dot \gamma_{x,\eps}^-(t) ) \exp\Big(-\eps\int_0^t\int_0^1\frac{\partial L}{\partial u}\Big(\gamma_{x,\eps}^-(s),\dot\gamma_{x,\eps}^-(s),\tau\eps u_\eps^-(\gamma_{x,\eps}^-(s))\Big)d\tau ds\Big)dt\nonumber
\ee
for any $f(\cdot,\cdot)\in C_c(TM,\R)$. 
\end{prop}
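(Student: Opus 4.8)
The plan is to derive the stated inequality by applying the subsolution property of $\om$ along the backward calibrated curve $\gamma_{x,\eps}^-$ associated with $u_\eps^-$, and then using a Gronwall/variation-of-constants argument to convert the $u$-dependence of the Lagrangian into the exponential weight appearing in the definition of $\wt\mu_x^\eps$. First I would fix $\eps\in(0,1]$ and $x\in M$, and recall that by Lemma~\ref{lem:vs-contact} the calibrated curve satisfies, for all $t\le 0$,
\[
u_\eps^-(x)-u_\eps^-(\gamma_{x,\eps}^-(t))=\int_t^0 L\big(\gamma_{x,\eps}^-(s),\dot\gamma_{x,\eps}^-(s),\eps u_\eps^-(\gamma_{x,\eps}^-(s))\big)+c(H)\,ds,
\]
while the subsolution $\om$ satisfies the reverse inequality with $u=0$ inside $L$:
\[
\om(x)-\om(\gamma_{x,\eps}^-(t))\le\int_t^0 L\big(\gamma_{x,\eps}^-(s),\dot\gamma_{x,\eps}^-(s),0\big)+c(H)\,ds.
\]
Subtracting, and writing $w(s):=u_\eps^-(\gamma_{x,\eps}^-(s))-\om(\gamma_{x,\eps}^-(s))$, I get
\[
w(0)-w(t)\ge\int_t^0\Big[L\big(\gamma,\dot\gamma,\eps u_\eps^-(\gamma)\big)-L\big(\gamma,\dot\gamma,0\big)\Big]\,ds
=\int_t^0\eps u_\eps^-(\gamma(s))\int_0^1\frac{\partial L}{\partial u}\big(\gamma(s),\dot\gamma(s),\tau\eps u_\eps^-(\gamma(s))\big)\,d\tau\,ds,
\]
using the fundamental theorem of calculus in $\tau$ exactly as in the proof of Proposition~\ref{prop:geq}.

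Next I would split $u_\eps^-(\gamma(s))=w(s)+\om(\gamma(s))$ inside that integral. Denoting $a(s):=\eps\int_0^1\frac{\partial L}{\partial u}(\gamma(s),\dot\gamma(s),\tau\eps u_\eps^-(\gamma(s)))\,d\tau$, which is nonnegative by (H3)/monotonicity of $L$ in $u$, the inequality reads
\[
w(0)\ge w(t)+\int_t^0 a(s)\,w(s)\,ds+\int_t^0 a(s)\,\om(\gamma(s))\,ds .
\]
This is a linear Gronwall-type inequality for $w$ along the half-line. The standard move is to multiply by the integrating factor $\exp\big(-\int_0^t a(s)\,ds\big)$ (note $t\le0$, so the exponent is $\int_t^0 a\ge0$ with the right sign) and recognize a monotone quantity; letting $t\to-\infty$ and using that $w$ stays bounded (both $u_\eps^-$ and $\om$ are equi-Lipschitz and uniformly bounded, Lemma~\ref{lem:equi-lip} and Proposition~\ref{prop:sub-sol}) while $\exp(-\int_0^t a)\to 0$ when $\int_{-\infty}^0 a$ diverges — and stays integrable against the rest otherwise — kills the boundary term $w(t)\exp(-\int_0^t a)$. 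The surviving terms assemble precisely into
\[
w(0)\ \ge\ \int_{-\infty}^0 \om(\gamma(s))\,a(s)\,\exp\!\Big(-\!\int_0^{s}a(r)\,dr\Big)\,ds
=\int_{TM}\om(y)\int_0^1\frac{\partial L}{\partial u}\big(y,v,\tau\eps u_\eps^-(y)\big)\,d\tau\,d\wt\mu_x^\eps(y,v),
\]
by the very definition (\ref{eq:asy-mea}) of $\wt\mu_x^\eps$; since $w(0)=u_\eps^-(x)-\om(x)$, rearranging gives the claim. One should also check that $\wt\mu_x^\eps$ is a finite measure: this follows because $a(s)\ge \eps\kappa$ for a uniform $\kappa>0$ by (H3) (after bounding $|\dot\gamma_{x,\eps}^-|$ via Lemma~\ref{lem:com-cal}), so the exponential weight decays at rate at least $\eps\kappa$ as $s\to-\infty$, making the total mass finite and uniformly controlled; this also legitimizes passing to the limit $t\to-\infty$ above.

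The main obstacle, I expect, is the interchange of limit and the careful justification of the Gronwall step when $\frac{\partial L}{\partial u}$ is only known to be positive (not bounded below by a constant independent of $\eps$ on the relevant set) — this is exactly where the paper departs from \cite{CCIZ}'s (SH5). The resolution is that (H3) gives a two-sided bound $0<\partial_u H\le\Delta$, which transfers through the Legendre transform to a two-sided bound on $\partial_u L$ on any set where $|v|$ is bounded; and $|\dot\gamma_{x,\eps}^-|\le K$ uniformly by Lemma~\ref{lem:com-cal}, with $|\eps u_\eps^-|$ uniformly bounded by Lemma~\ref{lem:equi-lip}. So on the curve we genuinely have $c_1\le a(s)/\eps\le c_2$ for constants depending only on $H(x,p,0)$, and the integrating-factor argument goes through cleanly, with all error terms controlled uniformly in $\eps$. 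A secondary technical point is that $\gamma_{x,\eps}^-$ is only defined on $(-\infty,0]$ and the integrals are improper; monotone convergence (the integrand in (\ref{eq:asy-mea}) has a sign once tested against $\om$ of fixed sign, or one argues on $\om\pm\|\om\|_\infty$) handles this without fuss.
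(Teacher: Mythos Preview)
Your overall strategy is sound and close to the paper's, but there is a sign slip you must fix. You write that $a(s)=\eps\int_0^1\partial_u L(\gamma,\dot\gamma,\tau\eps u_\eps^-)\,d\tau$ is nonnegative ``by (H3)/monotonicity of $L$ in $u$''. In fact (H3) says $\partial_u H>0$, and the Legendre relation gives $\partial_u L=-\partial_u H<0$, so $a(s)<0$ everywhere. This is not cosmetic: it is precisely because $a<0$ that the weight $\beta(t)=\exp\big(-\int_0^t a\big)=\exp\big(\int_t^0 a\big)$ decays to zero as $t\to-\infty$, so that the boundary term $w(t)\beta(t)$ vanishes and $\wt\mu_x^\eps$ has finite mass. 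With your stated sign the factor would blow up and the argument would collapse. Once you flip the sign (and replace ``$a(s)\ge\eps\kappa$'' by ``$a(s)\le-\eps\kappa$'', which follows from continuity of $\partial_u H>0$ on the compact set cut out by Lemmas~\ref{lem:equi-lip} and~\ref{lem:com-cal}), your integrating-factor computation goes through.

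On the route: the paper does \emph{not} work directly with the integral inequality $\om(x)-\om(\gamma(t))\le\int_t^0(L(\gamma,\dot\gamma,0)+c(H))\,ds$. It first replaces $\om$ by a smooth approximant $\om_\delta$ (last item of Proposition~\ref{prop:sub-sol}) with $H(x,\partial_x\om_\delta,0)<c(H)+\delta$, derives a genuine differential inequality for $\om_\delta\circ\gamma$, multiplies by $\beta(t)$, integrates by parts, and only then sends $\delta\to0$ and $T\to\infty$. Your version avoids the Fathi--Siconolfi smoothing step: since $\om\circ\gamma$ is Lipschitz (hence absolutely continuous), dividing the subsolution inequality on $[t,t+h]$ by $h$ and letting $h\to0^+$ yields $\frac{d}{dt}\om(\gamma(t))\le L(\gamma(t),\dot\gamma(t),0)+c(H)$ a.e., and combined with Lemma~\ref{lem:su} this gives $w'(t)\ge a(t)w(t)+a(t)\om(\gamma(t))$ a.e. Then $P(t):=w(t)\beta(t)$ satisfies $P'(t)\ge a(t)\om(\gamma(t))\beta(t)$ a.e., and integrating from $-T$ to $0$ finishes. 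So your argument is a modest but genuine simplification of the paper's; you should, however, spell out this a.e.\ differentiation step rather than leave the Gronwall move at the level of ``recognize a monotone quantity'', and correct the sign throughout.
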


\begin{proof}
Due to Lemma \ref{lem:su}, for any $x\in M$ and any $\eps\in(0,1]$, there exists a backward calibrated curve $\gamma_{x,\eps}^-:(-\infty,0]\rightarrow M$ ending with $x$. Moreover, the viscosity solution $u_\eps^-(\cdot)$ is differentiable along $\gamma_{x,\eps}^-$ for all $t\in(-\infty,0)$, 
which implies
\be
\frac d{dt} u_\eps^-(\gamma_{x,\eps}^-(t))&=&L\Big(\gamma_{x,\eps}^-(t),\dot\gamma_{x,\eps}^-(t),\eps u_\eps^-(\gamma_{x,\eps}^-(t))\Big)+c(H)\nonumber\\
&=&L(\gamma_{x,\eps}^-(t),\dot\gamma_{x,\eps}^-(t),0)\nonumber\\
& &+\eps \int_0^1u_\eps^-(\gamma_{x,\eps}^-(t))\frac{\partial L}{\partial u}\Big(\gamma_{x,\eps}^-(t),\dot\gamma_{x,\eps}^-(t),\tau\eps u_\eps^-(\gamma_{x,\eps}^-(t))\Big)d\tau+c(H).\nonumber
\ee

Recall that $\om$ is a subsolution of  (\ref{eq:hj-0}), then 
\[
H(x,\partial_x\om(x),0)\leq c(H).
\]
Let $\delta>0$. According to Proposition (\ref{prop:sub-sol}), there exists a smooth function $w_\delta$ such that 
$$
\|\omega-\omega_\delta\|<\delta \mbox{ and } H(x,\partial_x\omega_\delta(x),0)<c(H)+\delta.
$$
Combining previous two inequalities we get 
\[
\frac{\mbox{d}}{\mbox{d}t}\omega_\delta(\gamma^-_{x,\epsilon}(t))\leq
\frac{\mbox{d}}{\mbox{d}t}u_{\epsilon}^-(\gamma^-_{x,\epsilon}(t))-\eps u_\eps^-(\gamma_{x,\eps}^-(t))\int^1_0\frac{\partial L}{\partial u}\Big(\gamma^-_{x,\epsilon}(t),\dot\gamma_{x,\eps}^-(t),\tau\eps u_\eps^-(\gamma_{x,\eps}^-(t))\Big)\mbox{d}\tau+\delta.
\]
For brevity we denote by 
\[
\alpha_{x,\eps}(t):=\int_0^1\frac{\partial L}{\partial u}\Big(\gamma_{x,\eps}^-(t),\dot\gamma_{x,\eps}^-(t),\tau\eps u_\eps^-(\gamma_{x,\eps}^-(t))\Big)d\tau.
\]
Then
\[
\frac d{dt}\Big(u_\eps^-(\gamma_{x,\eps}^-(t))e^{-\eps\int_0^t\alpha_{x,\eps}(s)ds}\Big)\geq e^{-\eps\int_0^t\alpha_{x,\eps}(s)ds} \Big(\frac d{dt}\om_\dt(\gamma_{x,\eps}^-(t))-\dt\Big).
\]
Integrating both sides we get 
\be
& &u_\eps^-(\gamma_{x,\eps}^-(t))\exp\Big(-\eps\int_0^t\alpha_{x,\eps}(s)ds\Big)\Bigg|_{-T}^0\nonumber\\
&\geq&\int_{-T}^0\exp\Big(-\eps\int_0^t\alpha_{x,\eps}(s)
ds\Big)\cdot\Big(\frac d{dt}\om_\delta(\gamma_{x,\eps}^-(t))-\delta\Big)dt\nonumber
\ee
which can be transferred into 
\be
& &u_\eps^-(x)-u_\eps^-(\gamma_{x,\eps}^-(-T))\exp\Big(\eps\int_{-T}^0\alpha_{x,\eps}(s)ds\Big)\nonumber\\
&\geq &\om_\delta(x)-\big(\om_\delta(\gamma_{x,\eps}^-(-T))+\delta T\big)\exp\Big(\eps\int_{-T}^0\alpha_{x,\eps}(s)ds\Big)\nonumber\\
& &-\int_{-T}^0\big(\om_\delta(\gamma_{x,\eps}^-(t))-\delta t\big)\frac d{dt}\exp\Big(-\eps\int_0^t\alpha_{x,\eps}(s)ds\Big)dt\nonumber\\
&=& \omega_\delta(x)-(\omega_\delta(\gamma_{x,\epsilon}^-(-T))+\delta T)\exp\bigg(\epsilon\int^0_{-T}\alpha_{x,\epsilon}(s)ds\bigg)\nonumber\\
& &+\epsilon\int^0_{-T}\omega_\delta(\gamma_{x,\epsilon}^-(t)-\delta t)\cdot \alpha_{x,\epsilon}(t)\cdot\exp\bigg(-\epsilon\int^t_0\alpha_{x,\epsilon}(s)ds\bigg)dt.\nonumber
\ee
Taking $\delta\to 0_+$, we derive that
\be\label{eq:main1-k}
& u_\eps^-(x)-u_\eps^-(\gamma_{x,\eps}^-(-T))\exp\Big(\eps\int_{-T}^0\alpha_{x,\eps}(s)ds\Big)\nonumber\\
&\geq \omega(x)-\omega(\gamma_{x,\epsilon}^-(-T))\cdot\exp\bigg(\epsilon\int^0_{-T}\alpha_{x,\epsilon}(s)ds\bigg)\nonumber\\
&+\epsilon\int^0_{-T}\omega(\gamma_{x,\epsilon}^-(t))\cdot \alpha_{x,\epsilon}(t)\cdot\exp\bigg(-\epsilon\int^t_0\alpha_{x,\epsilon}(s)ds\bigg)dt.
\ee
By Lemmas \ref{lem:su} and \ref{lem:equi-lip}, there exists a constant $a>0$ such that
$\alpha_{x,\epsilon}(t)<-a$ for all $t< 0$. Then
$$
\lim_{T\to+\infty}\exp\bigg(\int^0_{-T}\alpha_{x,\epsilon}(s)ds\bigg)=0.
$$
Taking $T\to+\infty$ of (\ref{eq:main1-k}), we derive 
$$
u_{\epsilon}^-(x)\geq\omega(x)+\epsilon\int^0_{-\infty}\omega(\gamma_{x,\epsilon}^-(t))\cdot \alpha_{x,\epsilon}(t)\cdot\exp\bigg(-\epsilon\int^t_0\alpha_{x,\epsilon}(s)ds\bigg)dt
$$
then complete the proof.
\end{proof}

The following conclusion implies that if $\wt\mu_x^\eps$ weakly converges to $\wt\mu_x$, then $\wt\mu_x$ has to be a rescaled Mather measure. 

\begin{lem}\label{lem:mat-mea}
Any weak limit $\wh \mu_x$ of the normalized measure 
\be
\wh\mu_x^\eps:=\frac{\wt\mu_x^\eps}{\int_{TM}d\wt\mu_x^\eps}
\ee
 is a Mather measure in $\wt{\mathfrak M}$. Accordingly, $\mu_x:=(\pi_M)_*\wh\mu_x$ is a projected Mather measure.
\end{lem}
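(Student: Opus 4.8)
The plan is to identify $\wh\mu_x$ with a Mather measure by verifying the two defining properties of $\wt{\mathfrak M}$: that $\wh\mu_x$ is a closed (holonomic) probability measure on $TM$, and that it realizes the minimal action $\int_{TM} L(y,v,0)\,d\wh\mu_x=-c(H)$. Once both hold, a closed probability measure attaining $-c(H)$ is automatically invariant under $\varphi_{L,0}^t$ and is a Mather measure, so $\wh\mu_x\in\wt{\mathfrak M}$ and $\mu_x=(\pi_M)_*\wh\mu_x$ is a projected Mather measure. The enabling observations are the uniform estimates: writing $\beta_\eps(t):=\eps\int_0^t\alpha_{x,\eps}(s)\,ds$, assumption (H3) (which forces $\partial_u L<0$) together with Lemmas \ref{lem:su}, \ref{lem:equi-lip} and \ref{lem:com-cal} confines every argument occurring in $\alpha_{x,\eps}$ to a fixed compact set, so there are constants $0<a\le A$, independent of $\eps\in(0,1]$ and of $x$, with $-A\le\alpha_{x,\eps}(t)\le -a$ for all $t<0$. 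Hence $-\eps a t\le\beta_\eps(t)\le-\eps A t$ for $t\le 0$, the total mass $m_\eps:=\int_{TM}d\wt\mu_x^\eps=\eps\int_{-\infty}^0 e^{-\beta_\eps(t)}\,dt$ satisfies $1/A\le m_\eps\le 1/a$, and by Lemma \ref{lem:com-cal} every $\wh\mu_x^\eps$ is carried by the compact set $\{(y,v)\in TM:|v|_y\le K\}$. Thus $\{\wh\mu_x^\eps\}$ is tight, any weak-$*$ limit $\wh\mu_x$ along a sequence $\eps_n\to 0_+$ is again a probability measure supported on that same compact set (no mass escapes), and it is enough to test it against continuous functions.

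For closedness, fix $f\in C^1(M)$. Using $\tfrac{d}{dt}f(\gamma_{x,\eps}^-(t))=df(\gamma_{x,\eps}^-(t))\cdot\dot\gamma_{x,\eps}^-(t)$ in the defining formula (\ref{eq:asy-mea}) and integrating by parts (the boundary term at $-\infty$ vanishes since $f$ is bounded and $e^{-\beta_\eps(t)}\to 0$, while $\tfrac{d}{dt}e^{-\beta_\eps(t)}=-\eps\alpha_{x,\eps}(t)e^{-\beta_\eps(t)}$), one gets
\[
\int_{TM}df(y)\cdot v\,d\wt\mu_x^\eps=\eps f(x)+\eps^2\int_{-\infty}^0 f(\gamma_{x,\eps}^-(t))\,\alpha_{x,\eps}(t)\,e^{-\beta_\eps(t)}\,dt.
\]
The right-hand side has absolute value at most $\eps\|f\|_\infty(1+A/a)$, so dividing by $m_\eps\ge 1/A$ and passing to the weak limit yields $\int_{TM}df(y)\cdot v\,d\wh\mu_x=0$ for every $f\in C^1(M)$; that is, $\wh\mu_x$ is a closed measure.

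For the action, Lemma \ref{lem:su} gives $\tfrac{d}{dt}u_\eps^-(\gamma_{x,\eps}^-(t))=L(\gamma_{x,\eps}^-(t),\dot\gamma_{x,\eps}^-(t),\eps u_\eps^-(\gamma_{x,\eps}^-(t)))+c(H)$ along the interior of the calibrated curve, and expressing the increment of $L$ in $u$ via the fundamental theorem of calculus as $\eps u_\eps^-(\gamma_{x,\eps}^-(t))\,\alpha_{x,\eps}(t)$ gives
\[
L(\gamma_{x,\eps}^-(t),\dot\gamma_{x,\eps}^-(t),0)+c(H)=\frac{d}{dt}u_\eps^-(\gamma_{x,\eps}^-(t))-\eps u_\eps^-(\gamma_{x,\eps}^-(t))\,\alpha_{x,\eps}(t).
\]
Substituting into (\ref{eq:asy-mea}) and integrating the $\tfrac{d}{dt}u_\eps^-$ term by parts, the two $\eps^2$-terms cancel identically and leave the exact identity $\int_{TM}\big[L(y,v,0)+c(H)\big]\,d\wt\mu_x^\eps=\eps\, u_\eps^-(x)$. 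Since $u_\eps^-$ is uniformly bounded (Lemma \ref{lem:equi-lip}) and $m_\eps\ge 1/A$, dividing by $m_\eps$ and letting $\eps_n\to 0_+$ — using that the $\wh\mu_x^{\eps_n}$ are probability measures on one fixed compact set on which $L(\cdot,\cdot,0)$ is bounded and continuous — gives $\int_{TM}L(y,v,0)\,d\wh\mu_x=-c(H)$.

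With closedness and minimal action in hand, $\wh\mu_x$ is a Mather measure in $\wt{\mathfrak M}$ and $\mu_x=(\pi_M)_*\wh\mu_x$ is a projected Mather measure, which is the assertion. The main obstacle — really the substance of the proof — is securing the uniform two-sided bound $-A\le\alpha_{x,\eps}\le-a$ and the uniform velocity bound of Lemma \ref{lem:com-cal}: these are exactly what keep $m_\eps$ bounded away from $0$ and $\infty$ (so the normalization and the limit measure make sense), confine all the $\wh\mu_x^\eps$ to a single compact set (so no mass is lost under weak convergence), and reduce every error term to $O(\eps)$. Once this uniform control is available, the integration-by-parts computations and the exact cancellation in the action identity are short bookkeeping, and only a cutoff is needed to pass the non-compactly-supported integrands $df\cdot v$ and $L(\cdot,\cdot,0)$ through the limit.
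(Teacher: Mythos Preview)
Your proposal is correct and follows essentially the same route as the paper: establish closedness of the limit measure by integrating $\tfrac{d}{dt}f(\gamma_{x,\eps}^-)$ against the weight by parts, establish minimal action from the calibration identity $\tfrac{d}{dt}u_\eps^-(\gamma_{x,\eps}^-)=L+c(H)$, and then invoke Ma\~n\'e's characterization (Theorem \ref{thm:mane}). Your version is in fact tidier in two respects --- you make the uniform bounds $1/A\le m_\eps\le 1/a$ and the compact support explicit (so tightness and passage to the limit are justified on the page), and your exact identity $\int_{TM}[L(y,v,0)+c(H)]\,d\wt\mu_x^\eps=\eps\,u_\eps^-(x)$ works directly with $L(\cdot,\cdot,0)$, whereas the paper shows the analogous limit for $L(\cdot,\cdot,\eps u_\eps^-)$ and relies on $\eps u_\eps^-\to 0$ to reach the same conclusion --- but these are refinements, not a different argument.
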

\begin{proof}
For brevity let's denote by  
\[
\beta_{x,\eps}(t):=\exp\Big(-\eps\int_0^t\alpha_{x,\eps}(s)ds\Big).
\]
For any sequence $\eps\rightarrow 0_+$ such that $\wh\mu_x^{\eps}$ weakly converges to a $\wh \mu_x$, it suffices to show $\wh\mu_x\in\wt{\mathfrak M}$ by the following two steps (due to Theorem \ref{thm:mane}):\smallskip

First, we show $\wt\mu_x$ is a closed measure, which is equivalent to show that for any $\phi(\cdot)\in C^1(M,\R)$, 
\[
\lim_{\eps\rightarrow 0_+}\eps \int_{-\infty}^0\frac{d}{dt}\phi(\gamma_{x,\eps}^-(t))\beta_{x,\eps}(t)dt=0.
\]
Taking integration by part for the left side, we get
\[
\eps\beta_{x,\eps}(t)\phi(\gamma_{x,\eps}^-(t))\Big|_{-\infty}^0-\eps\int_{-\infty}^0\phi(\gamma_{x,\eps}^-(t)) d\beta_{x,\eps}(t)
\]
which tends to zero as $\eps\rightarrow 0_+$ since there exists a $\dt>0$ such that for any $x\in M$, $\eps\in(0,1]$ and $t\in(-\infty,0]$, $\frac{\partial L}{\partial u}(\gamma_{x,\eps}^-(t),\dot \gamma_{x,\eps}^-(t), u_\eps^-(\gamma_{x,\eps}^-(t)))\leq-\dt<0$.\smallskip

Next, we will show that 
\be\label{eq:math-act}
\lim_{\eps\rightarrow 0_+}\eps \int_{-\infty}^0\Big[L\Big(\gamma_{x,\eps}^-(t),\dot \gamma_{x,\eps}^-(t), \eps u_\eps^-(\gamma_{x,\eps}^-(t))\Big)+c(H)\Big]\beta_{x,\eps}(t)dt=0.
\ee
Recall that for any backward calibrated curve $\gamma_{x,\eps}^-$, Lemma \ref{lem:su} tells us $u_\eps^-$ is differentiable along it for all $t\in(-\infty,0)$. Therefore, we have
\be
\frac d{dt}u_\eps^-(\gamma_{x,\eps}^-(t))&=&L(\gamma_{x,\eps}^-(t),\dot \gamma_{x,\eps}^-(t), \eps u_\eps^-(\gamma_{x,\eps}^-(t)))+ H(\gamma_{x,\eps}^-(t),\partial_x u_\eps^-(\gamma_{x,\eps}^-(t)),\eps u_\eps^-(\gamma_{x,\eps}^-(t)))\nonumber\\
&=&L(\gamma_{x,\eps}^-(t),\dot \gamma_{x,\eps}^-(t), \eps u_\eps^-(\gamma_{x,\eps}^-(t)))+c(H),
\ee
which implies
\[
\lim_{\eps\rightarrow 0_+}\eps \int_{-\infty}^0\frac{d}{dt}u_\eps^-(\gamma_{x,\eps}^-(t))\beta_{x,\eps}(t)dt=0
\]
since $\wh\mu_x$ proves to be closed measure so (\ref{eq:math-act}) is obtained. Due to Theorem \ref{thm:mane}, the normalized measure $\wh \mu_x$ is indeed a Mather measure. The graphic property of $\wh\mu_x$ follows from the graphic property of the Mather set $\wt\cM$, see Theorem \ref{thm:mat-gra}. Due to Definition \ref{defn:pro-mat}, $\mu_x$ is a projected Mather measure.
\end{proof} 
\medskip



\noindent{\it Proof of Theorem \ref{thm:1}:} Now we are ready to prove $u_0^*$ is the unique accumulating function of $\{u_\eps^-\}$ as $\eps\rightarrow 0_+$. First, for any accumulating function $u_0^-$, due to Proposition \ref{prop:geq}, $u_0^-\in\cF_-$. That implies $u_0^-\leq u_0^*$. On the other side, due to Proposition \ref{prop:leq}, if we take $\om\in\cF_-$, then
\be
u_0^-(x)&\geq& \om(x)+\int_{TM}\om(y)\frac{\partial L}{\partial u}(y,v,0)d\wt \mu_x(y,v)\nonumber\\
&=&\om(x)+\int_{TM}\om(y)\frac{\partial L}{\partial u}(y,v,0)d\wh \mu_x(y,v)\cdot \int_{TM}d\wt\mu_x\\
&\geq&\om(x)\nonumber
\ee
since $\wh \mu_x$ is a Mather measure due to Lemma \ref{lem:mat-mea}. As a result, we get $u_0^-\geq \sup_{\om\in\cF_-} \om=u_0^*$. Combining these two parts we get $u_0^-=u_0^*$.\qed

\vspace{20pt}

\section{Peierls barrier's interpretation of the limit solution}\label{s4}
\vspace{20pt}

In this section, we will give a different characterization of $u_0^*$ as an infimum, by using the Peierls barrier $h^\infty(\cdot,\cdot)$, and the projected Mather set $\cM$. In Section \ref{s3} we have proved the convergence of $u_\eps^-$ to $u_0^*$ as $\eps\rightarrow 0_+$. We will show the equivalence of $u_0^*$ to the following 
\be\label{eq:vs-conv-2}
\wh u_0^-(x)=\inf_{\mu\in\mathfrak M}\frac{\int_Mh^\infty(y,x)\cdot\frac{\partial L}{\partial u}(y,v(y),0)d\mu(y)}{\int_M\frac{\partial L}{\partial u}(y,v(y),0)d\mu(y)},\quad\forall\ x\in M.
\ee

\noindent{\it Proof of Theorem \ref{thm:2}:} This proof can be devided into two steps. First, for any $\om\in\cF_-$, we have
\[
\om(x)-\om(y)\leq h^\infty(y,x),\quad\forall x,y\in M.
\]
That implies
\be
\om(x)&=&\frac{\int_M\om(x)\frac{\partial L}{\partial u}(y,v(y),0)d\mu(y)}{\int_M\frac{\partial L}{\partial u}(y,v(y),0)d\mu(y)}\nonumber\\
&\leq&\frac{\int_M\om(y)\frac{\partial L}{\partial u}(y,v(y),0)d\mu(y)}{\int_M\frac{\partial L}{\partial u}(y,v(y),0)d\mu(y)}+\frac{\int_Mh^\infty(y,x)\frac{\partial L}{\partial u}(y,v(y),0)d\mu(y)}{\int_M\frac{\partial L}{\partial u}(y,v(y),0)d\mu(y)}\nonumber\\
&\leq&\frac{\int_Mh^\infty(y,x)\frac{\partial L}{\partial u}(y,v(y),0)d\mu(y)}{\int_M\frac{\partial L}{\partial u}(y,v(y),0)d\mu(y)},\quad\forall \mu\in\mathfrak M.\nonumber
\ee
By taking the infimum over $\mu\in\mathfrak M$ for the right side we get 
\[
\om(x)\leq \wh u_0^-(x),\quad\forall x\in M, 
\]
then by taking the supremum over $\om\in\cF_-$ for the left side we get $u_0^*\leq \wh u_0^-$. \\

To show $\wh u_0^-\leq u_0^*$, we first show that $\wh u_0^-$ is a viscosity subsolution of (\ref{eq:hj-0}) (see Appendix \ref{a1} for the definition). Due to the Ergodic Decomposition Theorem, any $\mu\in\mathfrak M$ can be represented as a convex combination of a family of ergodic measure $\mu_i\in ex(\mathfrak M)$. On the other side, for any fixed $y\in M$, $h^\infty(y,\cdot)$ is a weak KAM solution of (\ref{eq:hj-0}), which is definitely a subsolution (shown in Proposition \ref{prop:sol}). By the convexity of $H(x,p,0)$ in $p-$variable and the equi-Lipschitz continuity of all the subsolutions (see Proposition \ref{prop:sub-sol}), it follows that 
\[
h_\mu^\infty(x):=\dfrac{\int_Mh^\infty(y,x)\cdot\frac{\partial L}{\partial u}(y,v(y),0)d\mu(y)}{\int_M\frac{\partial L}{\partial u}(y,v(y),0)d\mu(y)},\quad\forall\ x\in M
\]
is a subsolution as well. Once again by Proposition \ref{prop:sub-sol}, we infer that an infimum of $h_\mu^\infty$ over $\mu\in\mathfrak M$ is still a subsolution. So $\wh u_0^-$ is a subsolution of (\ref{eq:hj-0}). By Proposition \ref{prop:sol}, we just need to show $\wh u_0^-\leq u_0^*$ on the projected Aubry set $\cA$ (see the definition in Appendix \ref{a1}), then $\wh u_0^-(x)\leq u_0^*(x)$ for all $x\in M$.
Recall that for any $y\in\cA$ fixed, the function $-h^\infty(\cdot,y)$ is a viscosity subsolution of (\ref{eq:hj-0}), then the following defined
\be
\om(x)&:=&-h^\infty(x,y)+\inf_{\mu\in\mathfrak M}\frac{\int_Mh^\infty(z,y)\frac{\partial L}{\partial u}(z,v(z),0)d\mu(z)}{\int_M\frac{\partial L}{\partial u}(z,v(z),0)d\mu(z)}\nonumber\\
&=&-h^\infty(x,y)+\wh u_0^-(y)\nonumber
\ee
is a subsolution as well. For any $\mu\in\mathfrak M$, we have 
\be
\int_M \om(x)\frac{\partial L}{\partial u}(x,v(x),0)d\mu(x)&=&-\int_M h^\infty(x,y) \frac{\partial L}{\partial u}(x,v(x),0)d\mu(x)\nonumber\\
& &+\wh u_0^-(y) \int_M \frac{\partial L}{\partial u}(x,v(x),0)d\mu(x)\geq 0\nonumber
\ee
due to $\frac{\partial L}{\partial u}(x,v,0)<0$ for all points in $TM\times\R$ and the definition of $\wh u_0^-$. That implies $\om\in\cF_-$. Therefore, $\om(x)\leq u_0^*(x)$ for all $x\in M$. In particular, we have 
\[
u_0^*(y)\geq \om(y)=-h^\infty(y,y)+\wh u_0^-(y)=\wh u_0^-(y),\quad\forall y\in \cA.
\]
So we finish the proof.\qed

 
 \vspace{10pt}
 
 \subsection{A comparison with the discounted system} In this part we make a comparison of the
 vanishing viscosity limit of solutions between the discounted Hamiltonians and the general contact Hamiltonians. For two different Hamiltonians satisfying our standing assumptions, 
 \[
\text{(discounted)}\quad  F(x,p,\eps u)=H_0(x,p)+\eps u
 \]
 and 
 \[
 \text{(contact)}\quad G(x,p,\eps u)=H_0(x,p)+\eps u H_1(x,p)+\eps^2 H_2(x,p,\eps u), 
 \]
 the convergence of associated viscosity solutions $u_{F,\eps}^-$ (resp. $u_{G,\eps}^-$) will be quite different as $\eps\rightarrow 0_+$, once $H_1(x,p)$ doesn't equal to a constant. We will explain this point by the following example.\\
 
\noindent{\bf Example:} Suppose $(x,p)\in T^*\T$ and we take 
\be
H_0(x,p)=\frac12p\big(p+2V(x)\big).
\ee
Notice that for this system there exists a unique periodic orbit $x(t)\in \T$ with 
\[
\dot x(t)=V(x(t)),\quad\forall t\in\R, 
\]
once $V$ keeps positive but not constant.

Due to Corollary \ref{cor:dis}, the discounted vanishing of $F(x,p,\eps u)$ gives us the limit by 
\be
u_F^-(x)&=&\frac 1{T_p}\int_0^{T_p}h^\infty(x(t),x)dt\nonumber\\
&=&\frac1{2\pi}\int_0^{2\pi}h^\infty(\theta,x)\frac{2\pi d\theta}{T_p\cdot V(\theta)}
\ee
 where $T_p$ is the period of $x(t)$. Actually, $\frac{2\pi}{T_p V(\theta)}$ is a density function on $\T$.
 However, if we take $H_1(x,p)=V(x)$, the contact vanishing of $G(x,p,\eps u)$ give us the limit by
 \be
 u_G^-(x)&=&\frac{\int_0^{T_p}h^\infty(x(t),x)f(x(t))dt}{\int_0^{T_p}f(x(t))dt}\nonumber\\
 &=&\frac1{2\pi}\int_0^{2\pi}h^\infty(\theta,x)d\theta.
 \ee
 We have much freedom to choose $V(\cdot)$ such that $ u_F^-(x)\neq u_G^-(x)$.



\vspace{20pt}

\appendix

\section{weak KAM Theorem for autonomous Tonelli Lagrangians}\label{a1}
\vspace{20pt}

For the Tonelli Lagrangian $L(x,v)$, the {\sf critical curve} is usually defined by a $C^1$ curve $\gamma :\R\rightarrow M$, such that the following {\sf Euler-Lagrange equation} holds
 \be\label{eq:el}
 \frac {d}{dt}L_v(\gamma,\dot\gamma)=L_x(\gamma,\dot\gamma)
 \ee
for all $t\in\R$. As a conjugation, the Legendre transformation give us a Hamiltonian
\[
H(x,p):=\max_{v\in T_xM}\langle p,v\rangle-L(x,v)
\]
and each solution of the Euler-Lagrange equation will correspond to a trajectory of (\ref{eq:ham}).
\begin{lem}[Young Inequality] For any $x\in M$, $p\in T_x^*M$ and $v\in T_xM$, 
\[
H(x,p)+L(x,v)\geq \langle p,v\rangle
\]
always holds and the equality is achieved as $v= H_p(x,p)$, or equivalently $p=L_v(x,v)$.
\end{lem}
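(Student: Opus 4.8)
The plan is to read the statement off directly from the defining formula $H(x,p)=\max_{v\in T_xM}\langle p,v\rangle-L(x,v)$: this is precisely the Fenchel--Young inequality for the fibrewise Legendre transform, and the Tonelli hypotheses on $L$ are exactly what is needed to make the extremal problem in that formula well behaved. So I would not invoke any dynamics, compactness of $M$, or properties of $c(H)$ — only pointwise convex analysis in the vector space $T_xM$ for a fixed $x$.

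First I would settle the inequality. Fix $x\in M$, $p\in T_x^*M$ and $v\in T_xM$. Since $H(x,p)$ is a supremum over all $w\in T_xM$ of $\langle p,w\rangle-L(x,w)$, in particular $H(x,p)\geq\langle p,v\rangle-L(x,v)$, and rearranging gives $H(x,p)+L(x,v)\geq\langle p,v\rangle$. The same line of reasoning shows that equality holds in the Young inequality if and only if the chosen $v$ is a maximizer of the concave function $w\mapsto\langle p,w\rangle-L(x,w)$ on $T_xM$; so the whole content of the equality statement is to identify that maximizer.

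Next I would analyze the maximization. Because $L(x,\cdot)$ is $C^2$ and superlinear, $\langle p,w\rangle-L(x,w)\to -\infty$ as $|w|_x\to\infty$, so the supremum is attained at some $v^*=v^*(x,p)$; because $L(x,\cdot)$ is strictly convex (positive definite fibrewise Hessian $\partial^2 L/\partial v^2$, dual to (H1)), the function $w\mapsto\langle p,w\rangle-L(x,w)$ is strictly concave, so $v^*$ is unique and is characterized by the vanishing of the first derivative, i.e. by $p=L_v(x,v^*)$. Combined with the previous paragraph, equality in Young's inequality holds iff $v=v^*(x,p)$, equivalently iff $p=L_v(x,v)$. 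To replace $v^*(x,p)$ by $H_p(x,p)$ I would differentiate the identity $H(x,p)=\langle p,v^*(x,p)\rangle-L(x,v^*(x,p))$ in $p$; by the implicit function theorem $v^*$ is $C^1$ in $p$, and the terms involving $\partial_p v^*$ cancel because $L_v(x,v^*)=p$, leaving $H_p(x,p)=v^*(x,p)$. Equivalently, $v\mapsto L_v(x,v)$ and $p\mapsto H_p(x,p)$ are mutually inverse diffeomorphisms of $T_xM$ and $T_x^*M$ by the convexity of $L$ (the autonomous version of the diffeomorphism $\cL$ recalled in Section~\ref{s2}). Hence the equality case is also $v=H_p(x,p)$, as claimed.

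The only step that genuinely uses the hypotheses, rather than a one-line manipulation, is the treatment of this extremal problem: superlinearity is what guarantees the maximizer exists, and strict convexity is what guarantees that the critical-point equation $p=L_v(x,v)$ is simultaneously necessary and sufficient for the maximum and that the Legendre maps are bijective. Everything else — the inequality itself and the translation between "$v=H_p(x,p)$" and "$p=L_v(x,v)$" — is bookkeeping.
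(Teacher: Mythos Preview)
Your proof is correct and is the standard Fenchel--Young argument. The paper itself does not supply a proof of this lemma at all: it is stated without proof in Appendix~\ref{a1} as a standard fact about the Legendre transform, so there is nothing to compare against beyond noting that your argument is exactly the expected one.
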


Notice that the minimizer of the following 
\be\label{eq:act-zero}
h^t(x,y)&:=&\min_{\substack{\gamma\in C^{ac}([0,t],M)\\\gamma(0)=x,\gamma(t)=y}}\int_0^{t}L(\gamma,\dot\gamma)d\tau
\ee
has to be a solution of (\ref{eq:el}) on $[0,t]\subset\R$. On the other side, for any $(x,v)\in TM$, there exists a unique critical curve $\gamma$  starting from it, which can be extended for all $t\in\R$. If we denote by $\varphi_L^t$ the Euler-Lagrange flow, we can make use of the {\sf Birkhoff Ergodic Theorem} and get a $\varphi_L^t-$invariant probability measure $\tilde{\mu}_\gamma$ by
\be 
\quad\quad\int_{TM}fd\tilde{\mu}_\gamma:=\lim_{T\rightarrow+\infty}\frac1T\int_0^Tf(\gamma,\dot\gamma)dt,\quad\forall f\in C_c(TM,\R).
\ee
Gather all these invariant probability measures into a set $\wt{\mathfrak M}_{inv}$, then the {\sf Ma\~n\'e Critical Value} can be defined by 
\be\label{eq:alpha}
c(H)=-\min_{\tilde{\mu}\in\wt{\mathfrak{M}}_{inv}}\int Ld\tilde{\mu}.
\ee
 The minimizers of (\ref{eq:alpha}) form the {\sf Mather measure set} $\wt{\mathfrak M}$, which is contained in $\wt{\mathfrak M}_{inv}$. The {\sf Mather set} is defined by 
\[
\wt\cM:=\overline{\bigcup_{\tilde{\mu}\in\wt{\mathfrak M}}supp(\tilde{\mu})}\subset TM
\]
which is a closed invariant set.

\begin{thm}[Graphic \cite{Ma}]\label{thm:mat-gra}
$\wt\cM$ is a Lipschitz graph over the {\sf projected Mather set} $\cM:=\pi_M\wt\cM$, i.e.
\[
\pi_x^{-1}:\cM\rightarrow TM
\]
is a Lipschitz function.
\end{thm}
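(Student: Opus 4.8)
The plan is to use that every orbit carried by $\wt\cM$ is globally action-minimizing for $L(\cdot,\cdot,0)+c(H)$, deduce from this that the velocity is determined by the base point through the differential of a critical subsolution (so $\wt\cM$ is a graph), and then promote injectivity of the projection to a genuine Lipschitz estimate via the crossing-lemma machinery of Aubry--Mather theory.

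First, by the Ma\~n\'e characterization (Theorem~\ref{thm:mane}) every $\tilde\mu\in\wt{\mathfrak M}$ is a closed minimizing measure, so each point of $supp(\tilde\mu)$ lies on an orbit $\Gamma(t)=(\gamma(t),\dot\gamma(t))$ of $\varphi_L^t$ that is globally calibrated by a fixed backward weak KAM solution $u^-$ of $H(x,\partial_x u^-,0)=c(H)$, i.e. $u^-(\gamma(t))-u^-(\gamma(s))=\int_s^t L(\gamma,\dot\gamma,0)+c(H)\,d\tau$ for all $s<t$. By the autonomous analogue of Lemma~\ref{lem:su}, $u^-$ is differentiable at each point of such a curve with $\partial_x u^-(\gamma(t))=L_v(\gamma(t),\dot\gamma(t),0)$, equivalently $\dot\gamma(t)=\partial_p H(\gamma(t),\partial_x u^-(\gamma(t)),0)$. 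Hence if $(x,v_1),(x,v_2)\in\wt\cM$ then $L_v(x,v_1,0)=\partial_x u^-(x)=L_v(x,v_2,0)$, and strict convexity of $L$ in $v$ (assumption (H1)) forces $v_1=v_2$. Taking closures, $\pi_x|_{\wt\cM}$ is injective; write $v=v(x)$ and $p(x):=L_v(x,v(x),0)=\partial_x u^-(x)$ for $x\in\cM$, so that $\wt\cM=\{(x,v(x)):x\in\cM\}$.

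The Lipschitz bound is the crux. The quantitative tool is Mather's crossing lemma: there exist $K,\epsilon_0>0$, depending only on $L$ and on the (compact) critical energy level, such that if $\gamma_1,\gamma_2:[-\epsilon_0,\epsilon_0]\to M$ are both action-minimizing for $L(\cdot,\cdot,0)+c(H)$ and $d_M(\gamma_1(0),\gamma_2(0))<\epsilon_0$, then $d_{TM}(\Gamma_1(0),\Gamma_2(0))\le K\,d_M(\gamma_1(0),\gamma_2(0))$; otherwise one splices the two arcs near the near-crossing and, balancing the $O(\delta^2)$ cost of a velocity mismatch $\delta$ against the gain of cutting a corner, produces a strictly cheaper competitor, contradicting minimality. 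Every orbit through a point of $\wt\cM$ is globally minimizing, hence minimizing on $[-\epsilon_0,\epsilon_0]$, so applying the lemma to the orbits through $(x,v(x))$ and $(y,v(y))$ with $x,y\in\cM$ close yields $|v(x)-v(y)|\le K\,d_M(x,y)$; since $\wt\cM$ is compact with uniformly bounded velocities, this local estimate promotes to a global Lipschitz constant. (Equivalently one may argue with a conjugate pair $(u^-,u^+)$ of weak KAM solutions coinciding on a set containing $\cM$, using semiconcavity of $u^-$ and semiconvexity of $u^+$ to control $x\mapsto p(x)$; the crossing lemma is precisely what makes that argument quantitative.)

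Finally, since $H$ is $C^2$ the map $\partial_p H(\cdot,\cdot,0)$ is locally Lipschitz, and $x\mapsto(x,p(x))$ is Lipschitz on the compact set $\cM$; composing, $\pi_x^{-1}:\cM\to TM$, $x\mapsto\big(x,\partial_p H(x,p(x),0)\big)$, is Lipschitz, which is the assertion. I expect the crossing lemma to be the main obstacle: it is the only non-formal ingredient, it requires a careful local perturbation estimate that must be uniform over the energy level, and it is the step where positive definiteness (H1) is essentially used.
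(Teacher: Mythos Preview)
The paper does not supply its own proof of this theorem: it is stated in the appendix with a citation to Mather's original article \cite{Ma} and used as a black box. So there is nothing in the paper to compare your argument against.

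Your sketch is a reasonable hybrid of the two standard proofs. The injectivity step via differentiability of a weak KAM solution $u^-$ along calibrated curves is Fathi's route; the Lipschitz bound via the crossing lemma is Mather's original route. Either one alone suffices: Mather's crossing lemma already gives injectivity together with the Lipschitz estimate in one stroke, while Fathi's approach obtains Lipschitz regularity by sandwiching $u^-$ and a conjugate forward solution $u^+$ (semiconcave vs.\ semiconvex, touching on the Aubry set) and invoking that a function which is both semiconcave and semiconvex is $C^{1,1}$. Mixing the two is harmless but redundant.

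Two small points to tighten. First, the sentence ``Taking closures, $\pi_x|_{\wt\cM}$ is injective'' is not justified as written: injectivity on a dense invariant subset does not pass to the closure without a uniform modulus. In your argument the crossing lemma supplies exactly that modulus, so the correct order is to prove the local Lipschitz estimate on orbits first and then extend to $\overline{\bigcup supp(\tilde\mu)}$ by continuity; injectivity on the closure is a consequence, not a preliminary step. Second, you assert that every orbit in $supp(\tilde\mu)$ is calibrated by a \emph{single} fixed $u^-$; this is true (any backward weak KAM solution calibrates every orbit in the Mather set), but it is itself a nontrivial fact usually derived alongside the graph theorem, so if you want a self-contained proof you should either justify it or bypass it by running the crossing lemma directly on globally minimizing orbits, which is how Mather's original argument proceeds.
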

\begin{defn}\label{defn:pro-mat}
Due to Theorem \ref{thm:mat-gra}, for each $\wt\mu\in\widetilde{\mathfrak M}$, there exists a {\sf projected Mather measure} $\mu=(\pi_M)_*\wt\mu$ defined by 
\[
\int_Mf(x)d\mu(x)=\int_{TM}f\circ\pi_{M}(x,v)d\wt\mu(x,v),\quad\forall f\in C^0(M,\R).
\]
We denote by $\mathfrak{M}$ the {\sf projected Mather measure set}.
\end{defn}

Following the notions of \cite{B_0}, we define 
the {\sf $c(H)-$action function} by
\begin{equation}
h_{c}^t(x,y)=\inf_{\substack{\xi\in C^{ac}([0,t],M)\\
\xi(0)=x\\
\xi(t)=y}}\int_0^{t}L(\xi(s),\dot{\xi}(s)) ds+c(H)t.
\end{equation}
and the {\sf Peierls barrier function} by
\be
h^\infty(x,y)=\liminf_{t\rightarrow+\infty}h_c^t(x,y).
\ee
The {\sf projected Aubry set} is defined by 
\[
\cA=\{x\in M|h^\infty(x,x)=0\}.
\]
We can see that $\cA$ is a closed set of $M$ and contains $\cM$. Besides, it can be further decomposed due to the following:

\begin{defn}
The {\sf projected Aubry class} is defined by the element in $\cA/d_c$ where the metric $d_c:\cA\times \cA\rightarrow\R$ is defined by:
\be\label{eq:aubry-clas}
 d_c(x,y)=h^\infty(x,y)+h^\infty(y,x).
\ee
\end{defn}

\begin{defn}
 A function $u:M\rightarrow\R$ is called a {\sf viscosity subsolution}, or {\sf subsolution} for short ($u\prec L+c(H)$), if $u(y)-u(x)\leq h_c^t(x,y)$ for all $(x,y)\in M\times M$ and $t\geq 0$. 
\end{defn}
The following properties have been proved in \cite{F,FS_0,FS}:
\begin{prop}\label{prop:sub-sol}
For a Tonelli Hamiltonian $H$, 
\begin{itemize}
\item all the subsolutions are equi-Lipschitz, we have 
\[
H(x,\nabla u(x))\leq c(H),\ \ a.e.\ x\in M;
\]
\item each subsolution $u$ is a real solution on $\cA$ and keeps differentiable;
\item for any $y\in M$ fixed, $-h^\infty(\cdot,y)$ is a subsolution;
\item if $u$ is the pointwise supremum of a family of subsolutions, then $u$ is a subsolution;
\item if $u$ is the pointwise infimum of a family of subsolutions, then $u$ is a subsolution;
\item if $u$ is a convex combination of a family of subsolutions, then $u$ is a  subsolution;
\item for any subsolution $u$ and any $\eps>0$, there exists a $C^\infty$ function $u_\eps$ such that  $\|u_\eps-u\|\leq\eps$ and $H(x,\partial_x u_\eps(x))\leq c(H)+\eps$.
\end{itemize}
\end{prop}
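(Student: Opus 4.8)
These are the classical structural facts of weak KAM theory for the autonomous Tonelli setting, so the plan is to give the short self-contained arguments for the purely formal items and to invoke the Fathi--Siconolfi regularization and the Aubry-set calibration for the two analytic ones. Throughout I would use the working definition already recalled: $u$ is a subsolution iff $u(y)-u(x)\le h_c^t(x,y)$ for all $x,y\in M$ and all $t\ge 0$. For the equi-Lipschitz bound, superlinearity of $L$ makes the fiberwise sublevel set $\{p\in T^*_xM:\ H(x,p)\le c(H)\}$ compact uniformly in $x$; taking $\kappa$ a uniform bound for $|p|$ on it, Young's inequality $L(x,v)+c(H)\ge\langle p,v\rangle$ gives $h_c^t(x,y)\le\kappa\,d(x,y)$ for short times (straight segments in a chart), so every subsolution is $\kappa$-Lipschitz; and at a differentiability point $x$, letting $t\to 0^+$ in $u(\exp_x(tv))-u(x)\le h_c^t(x,\exp_x(tv))$ yields $\langle du(x),v\rangle\le L(x,v)+c(H)$ for all $v$, i.e. $H(x,du(x))\le c(H)$ a.e.

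The stability items follow directly from the dominated inequality. If each $u_i\prec L+c(H)$, then from $u_i(y)\le u_i(x)+h_c^t(x,y)$ one gets, for $u=\sup_i u_i$, that the right side is $\le u(x)+h_c^t(x,y)$, hence $u\prec L+c(H)$ after taking the supremum on the left; for $u=\inf_i u_i$ one has $u(y)\le u_j(y)\le u_j(x)+h_c^t(x,y)$ for every $j$, so $u(y)\le u(x)+h_c^t(x,y)$; for a convex combination, or an integral against a probability measure, one integrates the inequality and uses that the weights sum to $1$. Finiteness and continuity of the output are guaranteed by the common Lipschitz constant together with a uniform pointwise bound on the family. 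That $-h^\infty(\cdot,y)$ is a subsolution comes from subadditivity of the $c(H)$-action, $h_c^{t+s}(z,y)\le h_c^t(z,z')+h_c^s(z',y)$: taking $\liminf$ in $s$ gives $h^\infty(z,y)\le h_c^t(z,z')+h^\infty(z',y)$, which rearranges to $(-h^\infty(\cdot,y))(z')-(-h^\infty(\cdot,y))(z)\le h_c^t(z,z')$.

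The two analytic items are where I would lean on the literature. For $x\in\cA$ one has $h^\infty(x,x)=0$, which lets one build a curve $\gamma:(-\infty,0]\to M$ with $\gamma(0)=x$ along which any subsolution $u$ is calibrated; calibration forces $\partial_x u(\gamma(t))=L_v(\gamma(t),\dot\gamma(t))$ by the graph property --- exactly the argument of Lemma \ref{lem:su} --- so $u$ is differentiable along $\gamma$, in particular at $x$, and there satisfies $H(x,du(x))=c(H)$; thus $u$ is a genuine solution on $\cA$. The last item is the Fathi--Siconolfi smoothing theorem: one perturbs $u$ to a strict subsolution off $\cA$ and then mollifies, using convexity of $H$ in $p$ together with Jensen's inequality to bound $H(x,\partial_x(u*\rho))$ by the $\rho$-average of $H(\cdot,\partial_x u)$ plus an error controlled by the mollification scale and the modulus of continuity of $H$, which produces the claimed $C^\infty$ approximant. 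I would cite \cite{F} (and \cite{WWY} for the calibration-implies-differentiability step) for the Aubry-set statement and \cite{FS_0,FS} for the regularization; the delicate construction near $\cA$ in that last step is the only genuine obstacle, and reproving it here is unnecessary since only its conclusion is used in the present paper.
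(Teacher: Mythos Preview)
The paper does not prove this proposition at all: it is stated in the appendix as a list of background facts with the preface ``The following properties have been proved in \cite{F,FS_0,FS}''. Your proposal therefore goes well beyond what the paper does, supplying short arguments for the formal items and correctly deferring the Aubry-set differentiability and the $C^\infty$ approximation to \cite{F,FS_0,FS}. The stability-under-sup/inf/convex-combination arguments and the subadditivity proof that $-h^\infty(\cdot,y)$ is a subsolution are clean and correct, as is the derivation of $H(x,du(x))\le c(H)$ at differentiability points.

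One small slip: in your equi-Lipschitz step, Young's inequality $L(x,v)+c(H)\ge\langle p,v\rangle$ gives a \emph{lower} bound on $L+c(H)$, so it cannot directly yield the upper bound $h_c^t(x,y)\le\kappa\,d(x,y)$ you claim. The standard route (also the one used elsewhere in the paper, e.g.\ Appendix~\ref{a3}) is to bound $h_c^t(x,y)$ from above by evaluating the action along a unit-speed geodesic and using $C_1:=\max_{|v|\le 1,\,x\in M}L(x,v,0)<\infty$, which gives $u(y)-u(x)\le(C_1+c(H))\,d(x,y)$. Once Lipschitz continuity is established this way, your a.e.\ inequality $H(x,du(x))\le c(H)$ follows exactly as you wrote; the bound $|p|\le\kappa$ on the sublevel $\{H(x,p)\le c(H)\}$ then recovers the sharper Lipschitz constant a posteriori. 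This is a wording issue rather than a real gap.
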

\begin{defn}
A function  $u^-:M\rightarrow\R$ is called a {\sf weak KAM solution (or viscosity solution} of a Tonelli Hamiltonian $H(x,p)$, if 
\begin{itemize}
\item $u^-\prec L+c(H)$;
\item $\forall x\in M$, there exists a {\sf backward calibrated curve} $\gamma_x^-:(-\infty,0]\rightarrow M$ of $u^-(\cdot)$ ending with it. Namely, 
\be\label{eq:back-cal}
u^-(\gamma_x^-(b))-u^-(\gamma_x^-(a))=h_c^{b-a}(\gamma_x^-(a),\gamma_x^-(b)).
\ee
for all $a<b\leq 0$.
\end{itemize}
\end{defn}
Here we exhibit a list of properties the weak KAM solutions possess, which 
can be found in
\cite{F, FS}.
\begin{prop}\label{prop:sol}
\begin{itemize}
\item For any $y\in M$ fixed, $h^\infty(y,\cdot)$ is a viscosity solution.
\item The viscosity solution $u^-$ is differentiable along each backward calibrated curve $\gamma_x^-$ for all $t\in(-\infty,0)$.
\item Suppose $\om, u^-$ is a subsolution and a viscosity solution respectively. If $\om\leq u^-$ on  $\cA$, then $\om\leq u^-$ on $M$. In particular, if two solutions coincide on $\cA$, then they coincide on the whole manifold $M$.
\end{itemize}
\end{prop}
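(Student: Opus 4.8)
\emph{Overview.} The three assertions are the classical regularity and comparison properties of weak KAM solutions for the conservative Tonelli Hamiltonian $H(x,p)=H(x,p,0)$, and I would establish them in the stated order, relying only on the a priori compactness of Tonelli minimizers and the smoothness of the short-time action $h_c^\dt(x,y)$ for $\dt$ small. \emph{First assertion.} To see that $x\mapsto h^\infty(y,x)$ is a subsolution, I would start from the subadditivity $h_c^{t+s}(y,z)\le h_c^t(y,w)+h_c^s(w,z)$, valid because concatenating an almost-minimizing curve from $y$ to $w$ in time $t$ with one from $w$ to $z$ in time $s$ is an admissible competitor; taking $\liminf_{t\to+\infty}$ and reindexing $t+s$ gives $h^\infty(y,z)-h^\infty(y,w)\le h_c^s(w,z)$ for every $s\ge 0$, i.e. $h^\infty(y,\cdot)\prec L+c(H)$. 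For the calibrated curve, fix $x$, pick $t_n\to+\infty$ with $h_c^{t_n}(y,x)\to h^\infty(y,x)$, and let $\gm_n:[-t_n,0]\to M$ be the associated Tonelli minimizers reparametrized so that $\gm_n(0)=x$. A priori compactness bounds their velocities uniformly, so a diagonal extraction yields $\gm_x^-:(-\infty,0]\to M$ with $\gm_x^-(0)=x$ along a $C^1_{loc}$-convergent subsequence. Splitting the minimizing action over $[-t_n,a]$, $[a,b]$, $[b,0]$ and passing to the limit, using lower semicontinuity of the action together with the domination just proved, shows that $\gm_x^-$ calibrates $h^\infty(y,\cdot)$ on every $[a,b]\subset(-\infty,0]$; this no-loss-of-action verification is the one genuinely delicate point here.

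\emph{Second assertion.} Fix an interior parameter $t_0<0$ and set $p_0:=L_v(\gm_x^-(t_0),\dot\gm_x^-(t_0))$. For $\dt>0$ small the function $y\mapsto h_c^\dt(\gm_x^-(t_0-\dt),y)$ is $C^1$ near $\gm_x^-(t_0)$ with gradient $p_0$ there, by the generating-function property of short-time minimizers. Domination gives $u^-(y)\le u^-(\gm_x^-(t_0-\dt))+h_c^\dt(\gm_x^-(t_0-\dt),y)$ with equality at $y=\gm_x^-(t_0)$ by calibration, so this $C^1$ function touches $u^-$ from above and $p_0\in D^+u^-(\gm_x^-(t_0))$. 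Symmetrically, $y\mapsto u^-(\gm_x^-(t_0+\dt))-h_c^\dt(y,\gm_x^-(t_0+\dt))$ is $C^1$ near $\gm_x^-(t_0)$ with gradient $p_0$ and touches $u^-$ from below, so $p_0\in D^-u^-(\gm_x^-(t_0))$. Since both the super- and the sub-differential of the locally Lipschitz $u^-$ are nonempty at this point, $u^-$ is differentiable there and $\partial_x u^-(\gm_x^-(t_0))=p_0$.

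\emph{Third assertion.} Fix $x\in M$ and a backward calibrated curve $\gm_x^-$ of $u^-$ ending at $x$. Calibration gives $u^-(x)-u^-(\gm_x^-(t))=h_c^{-t}(\gm_x^-(t),x)$, while domination of $\om$ gives $\om(x)-\om(\gm_x^-(t))\le h_c^{-t}(\gm_x^-(t),x)$; subtracting,
\[
\om(x)-u^-(x)\le \om(\gm_x^-(t))-u^-(\gm_x^-(t)),\qquad \forall\,t<0.
\]
Pick $t_n\to-\infty$ with $\gm_x^-(t_n)\to z$. The crux is that $z\in\cA$: the calibrated segment between $\gm_x^-(t_n)$ and $\gm_x^-(t_m)$ realizes $h_c^{t_m-t_n}$ with action $u^-(\gm_x^-(t_m))-u^-(\gm_x^-(t_n))\to 0$, and closing two nearby returns near $z$ by a short controlled bridge produces loops through $z$ of arbitrarily long duration and action tending to $0$, so $h^\infty(z,z)\le 0$; as $h^\infty(z,z)\ge 0$ always, $z\in\cA$. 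Since $\om\le u^-$ on $\cA$ and both functions are continuous, the right-hand side above converges to $\om(z)-u^-(z)\le 0$, whence $\om(x)\le u^-(x)$; the final clause follows by applying this inequality to the two solutions in both directions. The main obstacle in the whole proposition is precisely this closing step placing the $\al$-limit of a calibrated curve inside $\cA$, which together with the no-loss-of-action argument of the first assertion carries all the analytic weight, the second assertion being a clean sandwich via Young's inequality.
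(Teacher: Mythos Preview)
Your proof is correct and follows the standard weak KAM arguments. Note, however, that the paper does not actually prove this proposition: it is stated in the appendix as a list of known properties with the remark that they ``can be found in \cite{F, FS}'', i.e.\ Fathi's monograph and Fathi--Siconolfi. Your arguments are precisely the classical ones from those references: subadditivity of $h_c^t$ plus Tonelli compactness for the first item, the super/sub-differential sandwich via the short-time generating function for the second, and the $\alpha$-limit-in-$\cA$ closing argument for the comparison in the third. So there is nothing to compare---you have supplied what the paper deliberately outsourced.
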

\vspace{20pt}

\section{Ma\~n\'e's Variational Principle of minimal measures}\label{a2}

\vspace{20pt}

In \cite{M}, Ma\~n\'e proposed his own aspect of getting the Mather measure. His approach doesn't need a flow-invariant assumption on the variational space of probability measures, therefore, it is more convenient comparing to Mather's approach.\\

Let $X$ be a metric separable space. A probability measure on $X$ is a nonnegative,
countably additive set function $\mu$ defined on the $\sigma-$algebra $\mathscr B(X)$ of Borel
subsets of $X$ such that $\mu(X) = 1$. In this paper, $X=M$ or its tangent bundle
$TM$. A measure on $TM$ is denoted by $\wt\mu$, and we remove the tilde if we project it to $M$. We say that a sequence
$\{\wt\mu_n \}_n$ of probability measures on TM (weakly) converges to a probability 
measure $\mu$ on $TM$ if
\[
\lim_{n\rightarrow+\infty}\int_{TM}f(x,v)d\wt\mu_n(x,v)=\int_{TM} f(x,v)d\wt\mu(x,v)
\]
for any $f\in C_c(TM,\R)$. Accordingly, 
\be
\lim_{n\rightarrow+\infty}\int_M f(x)d\mu_n(x)&:=&\lim_{n\rightarrow+\infty}\int_{TM} f\circ \pi_M (x,v) d\wt\mu_n(x,v)\nonumber\\
&=&\int_{TM} f\circ\pi_M(x,v)d\wt\mu(x,v)=:\int_M f(x)d\mu(x)\nonumber
\ee
for any $f\in C(M,\R)$.

\begin{defn}
A probability measure $\wh\mu$ on $TM$ is called {\sf closed} if it satisfies:
\begin{itemize}
\item $\int_{TM}|v|d\wh\mu(x,v)<+\infty$;
\item $\int_{TM}\langle \nabla\phi(x),v\rangle d\wh\mu(x,v)=0$ for every $\phi\in C^1(M,\R)$.
\end{itemize}
\end{defn}
Let's denote by $\widetilde{\mathfrak M}_c$ the set of all closed measures on $TM$, then the following conclusion is proved in \cite{M}:
\begin{thm}[Proposition 1.3 of \cite{M}]\label{thm:mane}
$\min_{\wt\mu\in\widetilde{\mathfrak M}_c}\int_{TM}L(x,v)d\widetilde{\mu}(x,v)=-c(H)$. Moreover, the minimizer $\wh\mu$ must be a Mather measure, i.e. $\wh\mu$ is invariant with respect to the Euler-Lagrange flow. 
\end{thm}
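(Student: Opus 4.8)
The statement is Ma\~n\'e's variational principle; a self--contained proof would split into the identification of the minimal value with $-c(H)$, which is soft duality plus weak compactness, and the upgrade of a minimizer to a flow--invariant (hence Mather) measure, which is the real content. The plan is as follows.

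For the value, one inequality is free: every $\varphi_L^t$--invariant probability measure $\wt\mu$ with $\int_{TM}|v|\,d\wt\mu<+\infty$ is closed, since $t\mapsto\int_{TM}\phi\circ\pi_M(\varphi_L^t(x,v))\,d\wt\mu$ is constant for $\phi\in C^1(M,\R)$ and differentiation at $t=0$ gives $\int_{TM}\langle\nabla\phi(x),v\rangle\,d\wt\mu=0$; hence $\wt{\mathfrak M}_{inv}\subseteq\wt{\mathfrak M}_c$ and, by (\ref{eq:alpha}), $\inf_{\wt\mu\in\wt{\mathfrak M}_c}\int_{TM}L\,d\wt\mu\le-c(H)$. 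For the reverse inequality, fix a critical subsolution and, given $\delta>0$, a smooth $u_\delta$ with $H(x,\nabla u_\delta(x))\le c(H)+\delta$ (Proposition \ref{prop:sub-sol}); the Young inequality gives $L(x,v)\ge\langle\nabla u_\delta(x),v\rangle-H(x,\nabla u_\delta(x))\ge\langle\nabla u_\delta(x),v\rangle-c(H)-\delta$ on $TM$, and integrating against an arbitrary $\wt\mu\in\wt{\mathfrak M}_c$ (taking $\phi=u_\delta$ in the closedness identity) yields $\int_{TM}L\,d\wt\mu\ge-c(H)-\delta$; letting $\delta\to0_+$ finishes this. Thus $\min_{\wt{\mathfrak M}_c}\int L=-c(H)$, and the minimum is attained: along a minimizing sequence, superlinearity (H2) forces uniform integrability of $|v|$, hence tightness; the closedness identity passes to a weak limit (using that uniform integrability) and the limit is again a probability measure, while $\wt\mu\mapsto\int L\,d\wt\mu$ is lower semicontinuous, so the limit is a closed minimizer.

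The substantive step is that a closed minimizer $\wt\mu$ is $\varphi_L^t$--invariant. Fix a $C^1$ critical subsolution $u$. In the identity
\[
0=\int_{TM}L\,d\wt\mu+c(H)=\int_{TM}\big(L(x,v)-\langle\nabla u(x),v\rangle+H(x,\nabla u(x))\big)d\wt\mu+\int_{TM}\big(c(H)-H(x,\nabla u(x))\big)d\wt\mu
\]
both integrands are nonnegative (Young's inequality and $u\prec L+c(H)$), so each vanishes $\wt\mu$--a.e.; hence $\wt\mu$ is carried by $\{(x,v):v=\partial_pH(x,\nabla u(x)),\ H(x,\nabla u(x))=c(H)\}$, a continuous graph over its projection. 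Letting $u$ range over all $C^1$ critical subsolutions, and using that these share a common differential exactly on $\cA$, we get $\mathrm{supp}\,\wt\mu\subseteq\wt\cA$, the Aubry set, which is $\varphi_L^t$--invariant and, by Theorem \ref{thm:mat-gra} and its Aubry--set version, a Lipschitz graph $\wt\cA=\{(x,V(x)):x\in\cA\}$. Consequently $\wt\mu$ and each push--forward $\wt\mu_t:=(\varphi_L^t)_*\wt\mu$ are determined by their base marginals, and it suffices to prove that $t\mapsto\int_{TM}\psi\circ\pi_M\,d\wt\mu_t$ is constant for every $\psi\in C^1(M,\R)$. Its derivative equals $\int_{TM}\langle\nabla\psi(x),v\rangle\,d\wt\mu_t=\int_{\cA}\big(\langle\nabla\psi,V\rangle\circ\Phi^t\big)d\mu$, where $\Phi^t$ is the flow on $\cA$ induced by $\varphi_L^t$ and $\mu=(\pi_M)_*\wt\mu$; the flow/group law $\big(\langle\nabla\psi,V\rangle\big)\circ\Phi^t=\langle\nabla(\psi\circ\Phi^t),V\rangle$ rewrites this as $\int_{TM}\langle\nabla(\psi\circ\Phi^t),v\rangle\,d\wt\mu=0$ by closedness of $\wt\mu$. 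Therefore $\wt\mu_t=\wt\mu$ for all $t$, so $\wt\mu\in\wt{\mathfrak M}_{inv}$ realizes the minimum in (\ref{eq:alpha}), that is, $\wt\mu$ is a Mather measure.

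I expect the main obstacle to be this last bootstrap: $\cA$ is merely a closed subset of $M$, not a submanifold, so the identity $\langle\nabla\psi,V\rangle\circ\Phi^t=\langle\nabla(\psi\circ\Phi^t),V\rangle$ and the membership $\psi\circ\Phi^t\in C^1(M)$ need justification (via a Lipschitz extension of $V$ together with a defect estimate, or by working throughout with the $C^1$ Euler--Lagrange flow on $TM$ and never descending to $\cA$). A clean alternative that sidesteps this is the occupation--measure picture: realize closed measures as weak limits of normalized occupation measures of long loops, observe that for a minimizer these loops may be taken asymptotically action--minimizing, hence genuine Euler--Lagrange trajectories, and invoke that any weak limit of time averages of a continuous flow is invariant. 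Either way, all the content is in establishing invariance; the value computation is routine convex analysis and weak compactness.
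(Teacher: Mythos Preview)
The paper does not prove this statement at all; it is quoted verbatim as Proposition~1.3 of Ma\~n\'e \cite{M} and used as a black box in Lemma~\ref{lem:mat-mea}. So there is no in-paper argument to compare against---you are supplying a proof the authors deliberately outsource.

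Your value computation is correct and standard: $\wt{\mathfrak M}_{inv}\subset\wt{\mathfrak M}_c$ gives one inequality, the Young-inequality lower bound against a smooth $\delta$-subsolution (last item of Proposition~\ref{prop:sub-sol}) gives the other, and superlinearity yields tightness and hence a minimizer. The reduction of the invariance question to ``$\wt\mu$ is supported on a Lipschitz graph over $\cA$'' via $C^1$ critical subsolutions is also correct (you are implicitly using Fathi--Siconolfi \cite{FS_0}, which the paper cites but does not restate in Proposition~\ref{prop:sub-sol}).

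The step you flag is a genuine gap, and your fixes need sharpening in two places. First, an arbitrary Lipschitz (Kirszbraun) extension of the graph map $V:\cA\to TM$ will \emph{not} do: its flow need not preserve $\cA$ nor coincide there with the projected Euler--Lagrange flow. What is actually needed is a $C^{1,1}$ critical subsolution $u$ (Fathi--Siconolfi/Bernard), so that $W(x):=\partial_pH(x,\nabla u(x))$ is globally Lipschitz on $M$, agrees with $V$ on $\cA$, and its flow $\Psi^t$ restricts on $\cA$ to the projection of $\varphi_L^t$. Second, even with Lipschitz $W$, your chain-rule identity $\langle\nabla\psi,V\rangle\circ\Phi^t=\langle\nabla(\psi\circ\Phi^t),V\rangle$ needs $\Phi^t\in C^1$, which fails. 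The ``defect estimate'' you allude to does work, but concretely: take smooth $W_n\to W$ uniformly with uniformly bounded Lipschitz constants and flows $\Psi^t_n$; then $f\circ\Psi^t_n\in C^\infty(M)$, closedness applies to it, and
\[
\frac{d}{dt}\int_M f\circ\Psi^t_n\,d\nu=\int_M\big\langle\nabla(f\circ\Psi^t_n),\,W_n\big\rangle\,d\nu=\int_M\big\langle\nabla(f\circ\Psi^t_n),\,W_n-W\big\rangle\,d\nu=O(\|W_n-W\|_\infty)
\]
uniformly on compact $t$-intervals (Gronwall controls $\|\nabla(f\circ\Psi^t_n)\|_\infty$). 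Passing to the limit gives $(\Psi^t)_*\nu=\nu$; this is exactly uniqueness for the continuity equation with Lipschitz drift. Your occupation-measure alternative is Ma\~n\'e's original route and is also valid, but it requires the separate structural fact that every closed measure is a weak limit of holonomic ones, which is nontrivial.
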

\vspace{20pt}

\section{the proof of Lemma \ref{lem:equi-lip}}\label{a3}
\vspace{20pt}

For any $\epsilon\in(0,1]$ and $\phi\in C^0(M,\mathbb{R})$, we have 
$$
\cT^{\epsilon-}_t\phi(x)=\inf_{\gamma(t)=x}\int^t_0L(\gamma(s),\dot{\gamma}(s),\epsilon\cT^{\epsilon-}_s\phi(\gamma(s)))+c(H)ds
$$
implicitly defined, 
with the infimum taken among all piecewise $C^1$ curve. Besides, for each $\phi\in C(M,\mathbb{R})$,
$$
\lim_{t\to+\infty}\cT^{\epsilon-}_t\phi(x)=u^-_{\epsilon}(x),
$$
where $u_\epsilon^-$ is the unique weak KAM solution of $H(x,\partial_xu_{\epsilon}^-,\epsilon u_{\epsilon}^-)=c(H)$, see Theorem 1.4 in \cite{Su} and Proposition A.1 in \cite{WWY}. Now we are ready to give a proof of Lemma \ref{lem:equi-lip}.

\begin{lem} For any  $\phi\in C(M,\mathbb{R})$ with $\|\phi\|\leq 1$,
the family $\{\cT^{\epsilon-}_t\phi(\cdot)|\epsilon\in(0,1],t\geq1\}$ is uniformly bounded.
\end{lem}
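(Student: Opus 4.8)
The plan is to bound $\cT^{\epsilon-}_t\phi$ from above and below by explicit quantities that do not depend on $\epsilon$ or $t$, by comparing the implicit contact Lax-Oleinik operator with the non-contact one built from $L(x,v,0)$.

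\textbf{Upper bound.} First I would use the defining variational formula together with the monotonicity assumption (H3). Taking as competitor in $\cT^{\epsilon-}_t\phi(x)=\inf_{\gamma(t)=x}\{\phi(\gamma(0))+\int_0^t L(\gamma,\dot\gamma,\epsilon\cT^{\epsilon-}_s\phi(\gamma))+c(H)\,ds\}$ a fixed constant-speed geodesic (or just any one reference curve ending at $x$), I need to control the Lagrangian along it. Since $L(x,v,u)$ is monotonically decreasing in $u$ (because $H$ is increasing in $u$, so its Legendre transform is decreasing in $u$), on the region where $\cT^{\epsilon-}_s\phi\ge 0$ we have $L(\gamma,\dot\gamma,\epsilon\cT^{\epsilon-}_s\phi(\gamma))\le L(\gamma,\dot\gamma,0)$, which is bounded on bounded velocity sets. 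The cleaner route is a Gronwall-type argument: set $g(t)=\sup_x\cT^{\epsilon-}_t\phi(x)$ (or work along an optimal curve), differentiate/estimate using $\partial_u L\ge -\Delta$ and $|\partial_u L|\le\Delta$ from (H3), and deduce a differential inequality $\dot g\le C+\epsilon\Delta|g|$ whose solution over $t\in[1,\infty)$ stays bounded uniformly in $\epsilon\in(0,1]$ because the fixed point $u^-_\epsilon$ of the semigroup is itself a bounded stationary point; alternatively one compares directly with the weak KAM solution $u^-_0$ of $H(x,\partial_x u,0)=c(H)$ plus a correction. Concretely, I expect to show $\cT^{\epsilon-}_t\phi(x)\le u^-_0(x)+\epsilon\Delta\cdot(\text{bound})\cdot t$-type estimates are \emph{not} what we want (they blow up in $t$); instead the fixed-point characterization forces the correct uniform bound, so the argument should be: the map $\psi\mapsto$ (one-step operator) is a contraction-like / order-preserving map whose fixed point $u^-_\epsilon$ is bounded by Lemma \ref{lem:equi-lip}'s companion estimate — but since we are \emph{proving} that lemma, we must instead get the bound directly from (H1)--(H3). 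So the honest plan: use that any large enough constant $C_+$ is a supersolution of the contact equation (because $H(x,0,\epsilon C_+)\ge H(x,0,0)+\epsilon\cdot 0\cdot\ldots$; pick $C_+$ with $H(x,0,\epsilon C_+)\ge c(H)$ for all $x,\epsilon$, possible since $\partial_u H>0$ and $M$ compact, using $c(H)$ fixed and $H(x,0,0)$ bounded — needs $\epsilon C_+$ uniformly large, so actually $C_+$ independent of $\epsilon$ does \emph{not} work; one takes $C_+=A/\epsilon$? that's not uniform either). The resolution: the correct uniform constant comes from $H(x,0,0)$: choose $C_+$ so that $H(x,p,0)\ge c(H)$ whenever... — I would instead invoke the comparison principle of \cite{Ba,CCIZ} to say $\cT^{\epsilon-}_t\phi\le\cT^{\epsilon-}_t\psi$ whenever $\phi\le\psi$, reducing to $\phi\equiv 1$, and then bound $\cT^{\epsilon-}_t(\mathbf 1)$ by iterating the one-step estimate with the Lipschitz control on $u\mapsto L(x,v,u)$.

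\textbf{Lower bound.} Symmetrically, along \emph{any} competitor curve the integrand $L(\gamma,\dot\gamma,\epsilon\cT^{\epsilon-}_s\phi(\gamma))+c(H)$ is bounded below by $-\max_x L(x,0,u)$-type constants on the relevant compact $u$-range once the upper bound is in hand, and $\phi\ge -1$; superlinearity (H2) prevents the infimum from escaping to $-\infty$ via large velocities. So the lower bound is comparatively routine once the $u$-range of $\cT^{\epsilon-}_s\phi$ over $s\in[0,t]$ has been confined to a fixed compact interval.

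\textbf{Main obstacle.} The crux is the self-referential (implicit) nature of $\cT^{\epsilon-}_t$: the bound on $\cT^{\epsilon-}_t\phi$ feeds back into the Lagrangian that defines $\cT^{\epsilon-}_t\phi$. I expect the decisive step to be setting up the right Gronwall / bootstrap: let $m(t)=\min_x\cT^{\epsilon-}_t\phi(x)$, $M(t)=\max_x\cT^{\epsilon-}_t\phi(x)$, derive from (H3) (the two-sided bound $0<\partial_u H\le\Delta$, hence $|\partial_u L|\le\Delta$ on the accessible range) the coupled inequalities $M(t)\le M(s)\,e^{\epsilon\Delta(t-s)}+C(e^{\epsilon\Delta(t-s)}-1)/(\epsilon\Delta)$ and a matching one from below, and observe the $\epsilon$-dependence is arranged so that for $t\ge 1$ these stay bounded uniformly in $\epsilon\in(0,1]$ — the key being that the ``bad'' growth factor is $e^{\epsilon\Delta t}$ with a compensating $1/\epsilon$, but since we also know $\cT^{\epsilon-}_t\phi$ converges as $t\to\infty$ to the $\epsilon$-fixed point, we can run the estimate on a fixed time window $[0,T_0]$ with $T_0$ independent of $\epsilon$ and then use that the fixed point is reached, or more simply note that the stationary solution $u^-_\epsilon$ satisfies the same two-sided inequality with $t$ arbitrary, forcing $\epsilon\Delta$-uniform bounds $-C/\Delta\le u^-_\epsilon\le C/\Delta$, and then $\cT^{\epsilon-}_t\phi$ for $t\ge 1$ is squeezed between translates of this. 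Turning this heuristic into the clean ``uniformly bounded for $t\ge 1$'' statement is the real work; everything else is Tonelli-estimate bookkeeping.
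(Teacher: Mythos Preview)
There is a genuine gap. Every route you sketch either blows up in $t$ or is circular. The Gronwall inequality $M(t)\le M(s)e^{\epsilon\Delta(t-s)}+C(e^{\epsilon\Delta(t-s)}-1)/(\epsilon\Delta)$ gives a bound that grows like $e^{\epsilon\Delta t}$, which is \emph{not} uniform for $t\ge 1$ (take $t\to\infty$ with $\epsilon$ fixed). Your attempted rescue --- ``since $\cT^{\epsilon-}_t\phi$ converges to the fixed point $u^-_\epsilon$'' --- is circular: the uniform bound on $u^-_\epsilon$ is precisely what this lemma is being used to establish (it is the content of Lemma~\ref{lem:equi-lip}). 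The supersolution idea fails for the reason you yourself note: a constant $C_+$ with $H(x,0,\epsilon C_+)\ge c(H)$ must scale like $1/\epsilon$. And your lower bound is conditioned on already having confined the $u$-range, which you have not.

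The idea you are missing is the one you glanced at and abandoned: the monotonicity $\partial_u L<0$ gives a one-sided comparison with $L(\cdot,\cdot,0)$ \emph{depending on the sign of $\cT^{\epsilon-}_s\phi$}, and this can be exploited without any a priori bound on the magnitude. For the lower bound, work along the minimizer $\gamma_{x,\epsilon}$: if $\cT^{\epsilon-}_s\phi(\gamma_{x,\epsilon}(s))<0$ on all of $[0,t]$ you get $L(\cdot,\cdot,\epsilon\cT^{\epsilon-}_s\phi)\ge L(\cdot,\cdot,0)$ everywhere, hence $\cT^{\epsilon-}_t\phi(x)\ge -1+h^t(\gamma_{x,\epsilon}(0),x)+c(H)t$, which is uniformly bounded below by the classical Tonelli estimate. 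If not, let $s_0$ be the \emph{last} zero-crossing; then $\cT^{\epsilon-}_{s_0}\phi(\gamma_{x,\epsilon}(s_0))=0$ and negativity holds on $(s_0,t]$, so restart from $s_0$ to get $\cT^{\epsilon-}_t\phi(x)\ge h^{t-s_0}(\gamma_{x,\epsilon}(s_0),x)+c(H)(t-s_0)$. For the upper bound one runs the symmetric argument along a competitor (the minimizer of the non-contact action $h^t$), with a small extra care when the last zero $s_1$ satisfies $t-s_1<1/2$. The whole point is that the zero-crossing restart kills the feedback loop: the ``initial value'' becomes $0$ rather than the uncontrolled $\cT^{\epsilon-}_{s_1}\phi$, and on the remaining interval the sign is fixed so the comparison with $L(\cdot,\cdot,0)$ goes the right way. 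No Gronwall is needed.
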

\proof
We claim $\{\cT^{\epsilon-}_t\phi(\cdot)|\epsilon\in(0,1],t\geq1\}$ is uniformly bounded from below.
Without loss of generality, we assume $\cT^{\eps -}_t\phi(x)<0$ for some $\epsilon\in (0,1]$ and $(x,t)\in M\times[1,+\infty)$, otherwise $0$ would be a lower bound of the family. Let $\gamma_{x,\epsilon}:[0,t]\to M$ be the associated minimizer of $\cT^{\epsilon-}_t\phi(x)$. Then, there are two probabilities:

\textbf{case I:} There exists $s_0\in[0,t)$ such that $\cT^{\epsilon-}_{s_0}\phi(\gamma_{x,\epsilon}(s_0))=0$ and $\cT^{\epsilon-}_{s}\phi(\gamma_{x,\epsilon}(s))<0, s\in(s_0,t]$.

\textbf{case II:} $\cT^{\epsilon-}_s\phi(\gamma_{x,\epsilon}(s))<0$ for all $s\in[0,t]$.

For case I, due to (H3), $L$ is strictly decreasing of $u$. Therefore,
\begin{align*}
	\cT^{\epsilon-}_t\phi(x)&=\cT^{\epsilon-}_{s_0}\phi(\gamma_{x,\epsilon}(s_0))+\int^t_{s_0}L(\gamma_{x,\epsilon}(s),\dot{\gamma}_{x,\epsilon}(s),\epsilon\cT^{\epsilon-}_s\phi(\gamma_{x,\epsilon}(s)))+c(H)ds\\
	&\geq \int^t_{s_0}L(\gamma_{x,\epsilon}(s),\dot{\gamma}_{x,\epsilon}(s),0)+c(H)ds\\
	&\geq h^{t-s_0}(\gamma_{x,\epsilon}(s_0),x)+c(H)(t-s_0). 
\end{align*}

It is well known that $h^{t-s_0}(\gamma_{x,\epsilon}(s_0),x)+c(H)(t-s_0)$ is uniformly bounded from below (see Lemma 5.3.2 in \cite{F} for instance). Moreover, the lower bound can be made independent of the selection of $\epsilon\in (0,1]$ and $(x,t-s_0)\in M\times (0,+\infty)$. 

For case II, 
\begin{align*}
	\cT^{\epsilon-}_t\phi(x)&=\phi(\gamma_{x,\epsilon}(0))+\int^t_{0}L(\gamma_{x,\epsilon}(s),\dot{\gamma}_{x,\epsilon}(s),\cT^{\epsilon-}_s\phi(\gamma_{x,\epsilon}(s)))+c(H)ds\\
	&\geq \min_{x\in M}\phi(x)+\int^t_{0}L(\gamma_{x,\epsilon}(s),\dot{\gamma}_{x,\epsilon}(s),0)+c(H)ds\\
	&\geq -1+h^t(\gamma_{x,\epsilon}(0),x)+c(H)t.
\end{align*}
Hence, $\{\cT^{\epsilon-}_t\phi(\cdot)|\epsilon\in(0,1]\}$ is bounded from below. Still the lower bound is independent of the selection of $\epsilon\in(0,1]$ and $(x,t)\in M\times [1,+\infty)$. 

As a summary, the family $\{\cT^{\epsilon-}_t\phi(\cdot)|\epsilon\in(0,1],t\geq1\}$ is uniformly bounded from below.\\

We claim $\{\cT^{\epsilon-}_t\phi(\cdot)|\epsilon\in(0,1],t\geq 1\}$ is uniformly bounded from above.
Without loss of generality, we assume $\cT^{\epsilon-}_t\phi(x)>0$ for some $\epsilon\in(0,1]$ and $(x,t)\in M\times [1,+\infty)$, otherwise $0$ is a upper bound of $\{\cT^{\epsilon-}_t\phi(\cdot)|\epsilon\in(0,1],t\geq 1\}$. Let $\beta:[0,t]\to M$ be the associated minimizer of $h^t(\gamma_{x,\epsilon}(0),x)$, i.e.,
$$
h^t(\gamma_{x,\epsilon}(0),x)=\int^t_0L(\beta(s),\dot{\beta}(s),0)ds.
$$
There are also two probabilities:

\textbf{case I'}
 $\cT^{\epsilon-}_s\phi(\beta(s))>0$ for each $s\in[0,t]$.
Hence,
\begin{align*}
	\cT^{\epsilon-}_t\phi(x)&\leq\phi(\beta(0))+\int^t_0L(\beta(s),\dot{\beta}(s),\eps\cT^{\epsilon-}_s\phi(\beta(s)))+c(H)ds\\
	&\leq \max_{x\in M}\phi(x)+\int^t_0L(\beta(s),\dot{\beta}(s),0)+c(H)ds\\
	&\leq 1+h^t(\gamma_{x,\epsilon}(t),x)+c(H)t.
\end{align*}

Since $t\geq 1$, $h^t(\gamma_{x,\epsilon}(t),x)+c(H)t$ is bounded from above. Hence, $\{\cT^{\epsilon-}_t\phi(\cdot)|\epsilon\in(0,t],t\geq1\}$ is uniformly bounded from above.

\textbf{case II'}
There exists $s_1\in [0,t)$ such that $\cT^{\epsilon-}_{s_1}\phi(\beta(s_1))=0$ and $\cT^{\epsilon-}_s\phi(\beta(s))>0,s\in(s_1,t]$. Then,
\begin{align*}
	\cT^{\epsilon-}_t\phi(x)&\leq\cT^{\epsilon-}_{s_1}\phi(\beta(s_1))+\int^t_{s_1}L(\beta(s),\dot{\beta}(s),\eps\cT^{\epsilon-}_s\phi(\beta(s)))+c(H)ds\\
	&\leq \int^t_{s_1}L(\beta(s),\dot{\beta}(s),0)+c(H)ds\\
	&=h^{t-s_1}(\beta(s_1),x)+c(H)(t-s_1).
\end{align*}
If $t-s_1\geq\frac{1}{2}$, then $h^{t-s_1}(\beta(s_1),x)+c(H)(t-s_1)$ is bounded from above.
If not, then
$s_1>\frac{1}{2}$. Note that
$$
h^t(\gamma_{x,\epsilon}(0),x)=h^{s_1}(\gamma_{x,\epsilon}(0),\beta(s_1))+h^{t-s_1}(\beta(s_1),x).
$$
We derive that
$$
h^{t-s_1}(\beta(s_1),x)+c(H)(t-s_1)=\bigg(h^t(\gamma_{x,\epsilon}(0),x)+c(H)t\bigg)-\bigg(h^{s_1}(\gamma_{x,\epsilon}(0),\beta(s_1))+c(H)s_1\bigg),
$$

Note that the first term is bounded from above ($t\geq 1$) and the second term is bounded from below ($s_1> 1/2$), see Lemma 5.3.2 in \cite{F}.
Hence, $\{\cT^{\epsilon-}_t\phi(\cdot)|\epsilon\in(0,1],t\geq 1\}$ is uniformly bounded from above.
\endproof
\begin{lem}
The family $\{u_\eps^-\}$ is uniformly bounded for all $\eps\in(0,1]$.
\end{lem}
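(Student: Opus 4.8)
The plan is to derive uniform bounds on $u_\eps^-$ from the uniform bounds on the Lax--Oleinik semigroup $\cT_t^{\eps-}$ established in the preceding lemma, via the convergence $\cT_t^{\eps-}\phi\to u_\eps^-$ as $t\to+\infty$. Concretely, I would fix once and for all a single test function, say $\phi\equiv 0$ (which certainly satisfies $\|\phi\|\le 1$). The previous lemma provides constants $-C_1\le \cT_t^{\eps-}\phi(x)\le C_2$ valid for \emph{all} $\eps\in(0,1]$, all $x\in M$, and all $t\ge 1$, where $C_1,C_2$ depend only on the data (through the action function $h^t$ of the conservative Hamiltonian $H(x,p,0)$ and $c(H)$), and in particular are independent of $\eps$.

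The key step is then simply to pass to the limit $t\to+\infty$: since $\cT_t^{\eps-}\phi(x)\to u_\eps^-(x)$ pointwise (indeed uniformly in $x$) by Theorem 1.4 of \cite{Su} / Proposition A.1 of \cite{WWY}, the two-sided inequality $-C_1\le \cT_t^{\eps-}\phi(x)\le C_2$ is preserved in the limit, giving
\[
-C_1\le u_\eps^-(x)\le C_2,\qquad\forall\, x\in M,\ \forall\,\eps\in(0,1].
\]
Since $C_1,C_2$ do not depend on $\eps$, this is exactly the claimed uniform boundedness of the family $\{u_\eps^-\}_{\eps\in(0,1]}$.

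There is essentially no obstacle here; the work has already been done in the lemma about $\cT_t^{\eps-}\phi$, whose proof was the genuinely delicate part (the case analysis on whether the minimizing curve keeps $\cT_s^{\eps-}\phi$ of one sign, and the reduction to uniform bounds for the conservative action $h^t$ together with $c(H)t$, using the monotonicity (H3) to compare $L(\cdot,\cdot,u)$ with $L(\cdot,\cdot,0)$). The only point requiring a word of care is that the bounds in that lemma are stated for $t\ge 1$, which is harmless since we let $t\to+\infty$; and that the limit $\cT_t^{\eps-}\phi\to u_\eps^-$ holds for the specific continuous $\phi$ we chose, which is guaranteed by the cited convergence results for every $\phi\in C(M,\R)$. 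Combining this lemma with the equi-Lipschitz estimate for subsolutions (Proposition \ref{prop:sub-sol}, noting each $u_\eps^-$ is in particular a subsolution of $H(x,\partial_x u,0)\le c(H)$ once one observes $H(x,\partial_x u_\eps^-,0)\le H(x,\partial_x u_\eps^-,\eps u_\eps^-)=c(H)$ on the domain where $u_\eps^-\ge 0$, together with the bound just obtained — or more directly from the uniform Lipschitz bound already implicit in the semigroup estimates) then yields the full statement of Lemma \ref{lem:equi-lip}.
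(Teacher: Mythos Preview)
Your proof of the lemma is correct and is essentially identical to the paper's: fix a $\phi$ with $\|\phi\|\le 1$, invoke the uniform bound $|\cT_t^{\eps-}\phi(x)|\le K$ for $t\ge 1$ from the preceding lemma, and pass to the limit $t\to+\infty$ using the convergence $\cT_t^{\eps-}\phi\to u_\eps^-$.

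One remark on your closing paragraph, which strays beyond the present lemma: the attempt to deduce equi-Lipschitzness by saying $u_\eps^-$ is a subsolution of $H(x,\partial_x u,0)\le c(H)$ does not work, since by (H3) the inequality $H(x,p,0)\le H(x,p,\eps u_\eps^-)$ holds only where $u_\eps^-\ge 0$, and you cannot conclude the subsolution property globally. The paper instead proves equi-Lipschitzness directly in the next lemma by comparing along geodesics, using the uniform bound $|u_\eps^-|\le K$ just obtained together with (H3) to control $L(\cdot,\cdot,\eps u_\eps^-)$ by $L(\cdot,\cdot,0)+\Delta K$.
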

\begin{proof}
Due to the boundedness of  $\{\cT^{\epsilon-}_t\phi(\cdot)|\epsilon\in(0,1],t\geq 1\}$,   there exists a $K>0$ such that for $\phi\in C(M,\mathbb{R})$ satisfying $\|\phi\|\leq 1$, 
$$
|\cT^{\epsilon-}_t\phi(x)|\leq K, (x,t)\in M\times[1,+\infty)\mbox{ and } \epsilon\in (0,1].
$$

Since $\lim_{t\to+\infty}\mathcal{T}^{\epsilon-}_t\phi(x)=u_{\epsilon}^{-}(x)$ is uniquely established,
 we get $|u^-_\epsilon(x)|\leq K$ for all $\epsilon\in(0,1]$ immediately.
\end{proof}

\begin{lem}
	The map $x \rightarrow u_{\epsilon}^-(x)$ is equi-Lipschitz for $\epsilon\in(0,1]$.
\end{lem}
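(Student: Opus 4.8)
The plan is to prove equi-Lipschitz continuity of $u_\epsilon^-$ with a constant depending only on $H(x,p,0)$, by exploiting the calibration structure of the weak KAM solution together with the uniform $L^\infty$ bound just established. First I would fix $\epsilon\in(0,1]$ and two points $x,y\in M$; since we already know $\|u_\epsilon^-\|\leq K$ uniformly in $\epsilon$, we may set $R:=K$ and work only with Lagrangian values $L(z,v,u)$ for $|u|\leq \epsilon R\leq R$. Using the third assumption (H3) and the uniform bound, we have $|L(z,v,w)-L(z,v,0)|\leq C(|v|)\cdot\epsilon R$ for $|w|\leq \epsilon R$, where $C(|v|)$ is controlled on compact velocity sets; more importantly, for the \emph{upper} estimate we only need the one-sided inequality $L(z,v,w)\leq L(z,v,0)+$ (a constant depending on $R$), which follows from monotonicity of $L$ in $u$ since $w\geq -\epsilon R\geq -R$.

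The core step is the standard trick: to bound $u_\epsilon^-(y)-u_\epsilon^-(x)$ from above, join $x$ to $y$ by a short geodesic $\sigma:[0,d(x,y)]\to M$ traversed at unit speed (so $|\dot\sigma|\leq 1$), prepend it to a backward calibrated curve $\gamma_{x,\epsilon}^-$ ending at $x$, and use the subsolution inequality from Lemma~\ref{lem:vs-contact}:
\[
u_\epsilon^-(y)-u_\epsilon^-(x)\leq \int_0^{d(x,y)}L\big(\sigma(s),\dot\sigma(s),\epsilon u_\epsilon^-(\sigma(s))\big)+c(H)\,ds.
\]
Since $|\dot\sigma|\leq 1$ and $|\epsilon u_\epsilon^-|\leq R$, the integrand is bounded by a constant $A$ depending only on $\max\{|L(z,v,w)|:|v|\leq 1,|w|\leq R\}+|c(H)|$, i.e. only on $H(x,p,0)$ and the fixed $\epsilon$-independent bound $R$. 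Hence $u_\epsilon^-(y)-u_\epsilon^-(x)\leq A\,d(x,y)$. Swapping the roles of $x$ and $y$ gives the reverse inequality, so $u_\epsilon^-$ is $A$-Lipschitz with $A$ independent of $\epsilon$; setting $\kappa:=A$ yields the claim.

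The main obstacle, and the place requiring slight care, is making the dependence of the constant $\kappa$ genuinely independent of $\epsilon$: the Lagrangian value $L(\sigma(s),\dot\sigma(s),\epsilon u_\epsilon^-(\sigma(s)))$ a priori involves $u_\epsilon^-$, which is what we are trying to control, so one must first invoke the already-proven uniform bound $\|u_\epsilon^-\|\leq K$ (from the preceding lemma on boundedness of $\cT_t^{\epsilon-}\phi$) to close the loop, and then observe that $\{L(z,v,w): z\in M,\ |v|\leq 1,\ |w|\leq K\}$ is a compact set whose sup is a finite constant determined by $H$ restricted to a bounded region — in particular not by $\epsilon$. A secondary technical point is that one should restrict the calibrated curve to the last unit of time (or simply not use it at all beyond the endpoint) so that no issue of infinite-time integrals arises; since we only append the short geodesic and keep $\gamma_{x,\epsilon}^-$ exactly as the calibrating curve up to time $0$, the subsolution inequality applies verbatim and no such issue appears. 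With these observations in place the argument is routine.
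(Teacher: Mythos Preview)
Your proposal is correct and follows essentially the same route as the paper: both arguments use the subsolution inequality of Lemma~\ref{lem:vs-contact} along a unit-speed geodesic from $x$ to $y$, bound the integrand $L(\sigma,\dot\sigma,\epsilon u_\epsilon^-(\sigma))+c(H)$ uniformly using the already-established bound $\|u_\epsilon^-\|\leq K$, and then symmetrize. The only cosmetic difference is that the paper bounds the integrand via (H3) as $L(\alpha,\dot\alpha,0)+\Delta K+c(H)\leq C_1+\Delta K+c(H)$, whereas you invoke compactness of $\{L(z,v,w):|v|\leq 1,\ |w|\leq K\}$; also, the mention of prepending a calibrated curve is superfluous (as you yourself note), since the subsolution inequality already applies directly to the geodesic.
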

\proof
 Let $x,y\in M$ and $\alpha:[0,d_R(x,y)]\to M$ be a geodesic connecting $x$ and $y$.
Note that $\sqrt{\langle\dot{\alpha},\dot{\alpha}\rangle}_R\leq 1$. We derive that
\begin{align*}
u^-_{\epsilon}(y)-u^-_{\epsilon}(x)&\leq \int^{d_R(x,y)}_0L(\alpha(s),\dot{\alpha}(s),\epsilon u^-_{\epsilon}(\alpha(s)))+c(H)ds\\
&\leq \int^{d_R(x,y)}_0L(\alpha(s),\dot{\alpha}(s),0)+\Delta\cdot K+c(H)ds\\
&\leq (C_1+\Delta\cdot K+c(H))d_R(x,y),
\end{align*}
where $C_1$ is a uniform constant such that 
\[
L(x,v,0)\leq C_1,\quad\forall \|v\|_R\leq 1.
\]
By switching the role of $x$ and $y$, we derive
$$
u^-_{\epsilon}(x)-u^-_{\epsilon}(y)\leq (C_1+\Delta\cdot K+c(H))d_R(x,y)
$$
and then
$$
|u^-_{\epsilon}(x)-u^-_{\epsilon}(y)|\leq (C_1+\Delta\cdot K+c(H))d_R(x,y),
$$
So we finish the proof.
\endproof

\vspace{60pt}

\end{document}